\numberwithin{equation}{section}
\newtheorem{theorem}{Theorem}[section]
\newtheorem{lemma}[theorem]{Lemma}
\newtheorem{corollary}[theorem]{Corollary}
\newtheorem{claim}{Claim}[theorem]
\theoremstyle{definition}
\newtheorem{definition}[theorem]{Definition}
\theoremstyle{remark}
\def\myheads#1;#2;{
\pagestyle{myheadings}
\markboth{{\sc\hfill #1\hfill\protect\makebox[0cm][r]{\rm\today}}}
{{\sc\protect\makebox[0cm][l]{\rm\today}\hfill #2\hfill}}
}
\newif\ifdeveloping
\newif\ifcommented
\newcommand{\comm}[1]{}
\renewcommand{\comm}[1]{
\fbox{\fbox{\begin{minipage}{300pt}#1\end{minipage}}
}}
\newcommand{\mc}[1]{\mathcal{#1}}
\newcommand{\setm}{\setminus}
\newcommand{\empt}{\emptyset}
\newcommand{\subs}{\subset}
\def\<{\left\langle}
\def\>{\right\rangle}
\def\br#1;#2;{\bigl[ {#1} \bigr]^ {#2} }
\newcommand{\cf}{\operatorname{cf}}
\newcommand{\intt}{\operatorname{Int}}
\newcommand{\ext}{\operatorname{e}}
\newcommand{\accu}[1]{{#1}^\circ}
\newcommand{\appr}[1]{$#1$-approachable}
\newcommand{\oo}{{\omega_1}}
\newcommand{\leka}[1]{{\overline{#1}}^{{<\kappa}}}
\newcommand{\clas}[1]{{\mc C}_{#1}}
\newcommand{\rcp}[1]{\operatorname{RC}^+(#1)}
\newcommand{\kpl}{{{\kappa}}^+}
\author[I. Juhász]{István Juhász}
\address
      { Alfréd Rényi Institute of Mathematics, Hungarian Academy of Sciences
}
\email{juhasz@renyi.hu}
\author[L. Soukup]{Lajos Soukup}
\address
      { Alfréd Rényi Institute of Mathematics, Hungarian Academy of Sciences}
\email{soukup@renyi.hu}
\urladdr{http://www.renyi.hu/$\tilde{}$soukup}
\author[Z. Szentmiklóssy]{
Zoltán Szentmiklóssy}
\address{E\"otv\"os Loránd University of Budapest}
\email{zoli@renyi.hu}
\title{Regular spaces of small extent are $\omega$-resolvable}
\thanks{The research on and preparation of this paper was
supported by  OTKA grant no. K 83726.}
\subjclass[2010]{54A35, 03E35, 54A25}
\keywords{resolvable, Lindelöf, countable extent}
\date{\today}
\begin{document}

\maketitle

\begin{abstract}
We improve some results of Pavlov and of Filatova, respectively, concerning a problem of Malychin
by showing that every regular space $X$ that satisfies $\Delta(X)>\ext(X)$
is ${\omega}$-resolvable. Here $\Delta(X)$, the dispersion character of $X$, is the
smallest size of a non-empty open set in $X$ and $\ext(X)$, the extent of $X$,
is the supremum of the sizes of all closed-and-discrete subsets of $X$.
In particular, regular Lindelöf spaces of uncountable dispersion character
are ${\omega}$-resolvable.

We also prove that
any regular Lindelöf space $X$ with $|X|=\Delta(X)=\omega_1$ is even $\oo$-resolvable.
The question if regular Lindelöf spaces of uncountable dispersion character are maximally
resolvable remains wide open.

\end{abstract}

\section{Introduction}

We start by recalling a few basic definitions and facts concerning
resolvability.
A topological space $X$ is said to be $\lambda$-resolvable ($\lambda$
a cardinal) if $X$ contains $\lambda$ many mutually disjoint dense
subsets. A natural upper bound on the resolvability of $X$ is
$$\Delta(X)=\min\{|G| : G  \text{ is non-empty open in }X\}\,,$$
called the dispersion character of $X$.
So, $X$ is said to be {\it maximally resolvable} if it is
$\Delta(X)$-resolvable. The expectation is that ``nice" spaces
should be maximally resolvable, an expectation verified e.g. by
the well-known facts that compact Hausdorff, or metric, or linearly ordered spaces
are all maximally resolvable.

It is also well-known, however,  that
there is a countable regular (hence ``nice") space with no
isolated points that is {\em irresolvable}, i.e. not even 2-resolvable.
Since countable spaces are (hereditarily) Lindelöf, this prompted
Malychin to ask the following natural question in \cite{M}: Is every
regular Lindelöf space of uncountable dispersion character (at least 2-)resolvable?
He also noted that the answer to this question is negative if regular is
weakened to Hausdorff.

Pavlov in \cite{P} proved the following very deep result that gave a
partial affirmative answer to Malychin's question: If $X$ is any regular space
satisfying $\Delta(X)>\ext(X)^+$ then $X$ is ${\omega}$-resolvable. (In fact,
he only needed the following assumption on $X$ that
we call $\pi$-regularity and is clearly weaker than regularity:
The regular closed sets in $X$ form a $\pi$-network, i.e. every non-empty open
set includes a non-empty regular closed set.)

We recall that the {\em extent} $\ext(X)$ of $X$ is the supremum of the sizes of
all closed-and-discrete subsets of $X$. Since Lindelöf spaces have countable extent,
it followed that regular Lindelöf spaces of dispersion character $> \omega_1$
are ${\omega}$-resolvable. Thus only the case $\Delta(X)=\omega_1$ of Malychin's
problem remained and that was settled by Filatova in \cite{F}: Any
regular Lindelöf space $X$ with $\Delta(X)=\omega_1$ is 2-resolvable. However,
her method of proof did not seem to give even 3-resolvable, not to mention $\omega$--resolvable
as in Pavlov's result.

Our main result in this paper, theorem \ref{tm:gen_e_res}, improves Pavlov's above result by
showing that the assumption
$\Delta(X)>\ext(X)^+$ in it can be relaxed to $\Delta(X)>\ext(X)$. This, of course, immediately
implies that Filatova's 2-resolvable can also be improved to $\omega$-resolvable.
We also think that the proof of our strengthening of Pavlov's result is significantly simpler
than Pavlov's original proof, especially in the case when $\Delta(X)$ is singular.

We do not know, however, if a regular space $X$ satisfying $\Delta(X)>\ext(X)$ is always maximally
resolvable, or even if regular Lindelöf spaces of uncountable dispersion character are maximally
resolvable. This problem should be confronted with our result from \cite{JSSz_res2} stating that
any topological space $X$ satisfying $\Delta(X)> s(X)$ is maximally resolvable.
Here $s(X)$, the spread of $X$,  is the supremum of the sizes of
all (relatively) discrete subsets of $X$.

Theorem \ref{tm:steppingup} in this paper implies that, for any infinite cardinal $\kappa$,
if all regular Lindelöf spaces of cardinality and
dispersion character $\kappa^+$ are $\kappa$-resolvable then
all such spaces are actually $\kappa^+$-resolvable
as well. This then, together with  theorem \ref{tm:gen_e_res},
implies that any regular Lindelöf space $X$ with $|X|=\Delta(X)=\omega_1$ is even $\oo$-resolvable,
i.e. maximally resolvable. Considering that after Pavlov and before Filatova this was the unsolved ``hard case"
of Malychin's problem, for which even 2-resolvability was unknown, it seems to be not unreasonable to
raise the question if regular Lindelöf spaces of uncountable dispersion character are maximally
resolvable.

\section{Preliminary results}

In this section we have collected some known and some new results that will play an
essential role in the proof of our main results, theorems \ref{tm:gen_e_res} and
\ref{tm:steppingup}.

First we fix a couple of important pieces of notation: For a topological space $X$,
we denote by $\tau^+(X)$ the collection of all non-empty open sets in $X$ and by
$\rcp{X}$ the family of all non-empty regular closed subsets of $X$. As we have noted
above, if $X$ is regular then $\rcp{X}$ is a $\pi$-network in $X$.

We shall make very frequent use of the following simple but basic result.

\begin{theorem}[Elkin, \cite{El}]\label{tm:el}
If $X$ is a topological space, ${\kappa}$ is any cardinal,
and the family
\begin{equation}\notag
\mathfrak{R}_\kappa(X) = \{\text{$Z\subs X$ : Z is ${\kappa}$-resolvable}\}
\end{equation}
is a $\pi$-network in $X$ then $X \in \mathfrak{R}_\kappa(X)$, i.e. $X$ is ${\kappa}$-resolvable.
\end{theorem}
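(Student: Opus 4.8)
The plan is to produce $\kappa$ pairwise disjoint dense subsets of $X$ by a single maximality argument. First I would consider the partially ordered set $\mathbb{P}$ whose elements are the subfamilies $\mathcal{W} \subseteq \mathfrak{R}_\kappa(X)$ that consist of pairwise disjoint sets, ordered by inclusion. The union of a chain in $\mathbb{P}$ is again a member of $\mathbb{P}$ (any two members of the union lie in a common element of the chain, hence are disjoint or equal), and $\emptyset \in \mathbb{P}$, so Zorn's lemma provides a maximal $\mathcal{W} \in \mathbb{P}$. Write $U := \bigcup \mathcal{W}$.

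The key step is to observe that $U$ is dense in $X$. If it were not, then $V := X \setminus \overline{U}$ would be a non-empty open set, so by hypothesis — $\mathfrak{R}_\kappa(X)$ being a $\pi$-network, whose members are non-empty — there would be a non-empty $\kappa$-resolvable $Z \subseteq V$. Since $Z \cap U = \emptyset$, the family $\mathcal{W} \cup \{Z\}$ would properly extend $\mathcal{W}$ in $\mathbb{P}$, contradicting maximality. (If $X = \emptyset$ the theorem is trivial.)

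Now for each $W \in \mathcal{W}$ fix, once and for all, a decomposition $W = \bigcup_{\alpha<\kappa} D^{W}_{\alpha}$ into $\kappa$ many pairwise disjoint subsets each of which is dense in $W$, and put $E_\alpha := \bigcup_{W \in \mathcal{W}} D^{W}_{\alpha}$ for $\alpha < \kappa$. The $E_\alpha$ are pairwise disjoint: if $E_\alpha$ and $E_\beta$ met for $\alpha \neq \beta$ then, the distinct members of $\mathcal{W}$ being disjoint, we would get $D^{W}_{\alpha} \cap D^{W}_{\beta} \neq \emptyset$ for a single $W$, which is impossible. Moreover each $E_\alpha$ is dense in $X$: for every $W \in \mathcal{W}$ we have $\overline{E_\alpha} \supseteq \overline{D^{W}_{\alpha}} \supseteq W$, hence $\overline{E_\alpha} \supseteq U$, and since $U$ is dense, $\overline{E_\alpha} = X$. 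Thus $\{E_\alpha : \alpha < \kappa\}$ witnesses that $X$ is $\kappa$-resolvable.

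The argument is entirely ``soft'', so I do not expect a serious obstacle; the only place that needs a moment's care is the last paragraph — the passage from ``$D^{W}_{\alpha}$ dense in $W$ for each $W$'' to ``$E_\alpha$ dense in $X$'' — which combines the elementary fact that a union of sets dense in the members of a disjoint family is dense in the union of that family with the density of $U$ secured by maximality. The minor set-theoretic bookkeeping of indexing all the decompositions uniformly by the same $\kappa$ is harmless.
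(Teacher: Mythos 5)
Your proof is correct: the maximal pairwise disjoint family of $\kappa$-resolvable sets obtained from Zorn's lemma has dense union by the $\pi$-network hypothesis, and merging the $\alpha$-th pieces of the fixed decompositions across the family yields $\kappa$ pairwise disjoint dense subsets of $X$, exactly as you argue. The paper states this result without proof (citing Elkin), and your argument is the standard one, so there is nothing to compare against and no gap to report.
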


Since for every $G \in \tau^+(X)$ there is $H \in \tau^+(X)$ such that $H \subs G$
and $|H| = \Delta(H)$, moreover then $R \in \rcp X$ and $R \subs H$ imply
$|R| = \Delta(R)\, \big(=|H|\big)$, we obtain the following simple but useful corollary.

\begin{corollary}\label{cl:rc}
Let $\mathcal{C}$ be a regular closed hereditary class of regular spaces,
i.e. such that $X \in \mc C$ implies $\rcp X \subs \mc C$. If every
space $X \in \mathcal{C}$ with $|X| = \Delta(X)$ has a $\kappa$-resolvable subspace then every
member of $\mathcal{C}$ is $\kappa$-resolvable.
\end{corollary}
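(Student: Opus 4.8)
The plan is to apply Elkin's theorem (Theorem \ref{tm:el}) to an arbitrary member $X \in \mc C$. By that theorem it suffices to check that the family $\mathfrak{R}_\kappa(X)$ of $\kappa$-resolvable subspaces of $X$ is a $\pi$-network in $X$; equivalently, I must produce inside every $G \in \tau^+(X)$ a non-empty $\kappa$-resolvable subspace of $X$ (adopting the usual convention that resolvable spaces are non-empty).

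So fix $G \in \tau^+(X)$. First I would pick, among the non-empty open subsets of $G$, one of least cardinality, call it $H$. Then every non-empty open $U \subs H$ has $|U| \ge |H|$, so $\Delta(H) = |H|$ — this is exactly the fact quoted just before the statement. Since $X$ is regular, $\rcp X$ is a $\pi$-network in $X$, hence there is some $R \in \rcp X$ with $R \subs H$. I then claim that $\Delta(R) = |R| \,(= |H|)$. Indeed, $R = \overline{\intt R}$, so $\intt R$ is a non-empty open subset of $H$ and therefore $|\intt R| = |H|$; and any non-empty relatively open $V \subs R$ meets the dense subset $\intt R$ of $R$, so $V \cap \intt R$ is again a non-empty open subset of $H$, whence $|V| \ge |H|$. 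Combined with $R \subs H$, this gives $|R| = |H|$ and $\Delta(R) = |H|$.

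Now $R$ is a regular space (a subspace of $X$), it belongs to $\mc C$ because $R \in \rcp X$ and $\mc C$ is regular closed hereditary, and $|R| = \Delta(R)$. Thus the hypothesis applies to $R$ and yields a non-empty $\kappa$-resolvable subspace $Z \subs R$. Since $Z$ is then also a subspace of $X$ and $Z \subs R \subs H \subs G$, this $Z$ is precisely the witness required. Therefore $\mathfrak{R}_\kappa(X)$ is a $\pi$-network in $X$, and Elkin's theorem gives that $X$ is $\kappa$-resolvable, as desired.

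The proof is short, and essentially the only point needing a moment's care is the verification that $\Delta(R) = |R|$, which rests on the density of $\intt R$ in the regular closed set $R$ together with the minimality of $|H|$; everything else is routine bookkeeping with the $\pi$-network property and the closure of $\mc C$ under regular closed subspaces.
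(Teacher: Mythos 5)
Your proof is correct and follows essentially the same route the paper takes: pass from $G$ to an open $H\subs G$ with $|H|=\Delta(H)$, then to a regular closed $R\subs H$ (which stays in $\mc C$ and satisfies $|R|=\Delta(R)$), extract a $\kappa$-resolvable subspace from the hypothesis, and conclude via Elkin's theorem. The details you supply, in particular the verification that $\Delta(R)=|R|$ via the density of $\intt R$ in $R$, are exactly what the paper leaves implicit in the sentence preceding the corollary.
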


In proving that certain spaces are  $\omega$-resolvable, like in the proof of
theorem \ref{tm:gen_e_res}, the following result comes naturally handy.

\begin{theorem}[Illanes, \cite{I}]\label{tm:illanes}
If a topological space $X$ is $k$-resolvable for each $k<{\omega}$
then $X$ is ${\omega}$-resolvable.
\end{theorem}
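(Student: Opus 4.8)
The plan is to reduce the statement to producing a partition of $X$ into countably many dense sets (equivalently, $\omega$ pairwise disjoint dense subsets, after absorbing the leftover into one piece), and to obtain such a partition as a ``limit'' of finite partitions into dense sets, each obtained from the previous one by a purely local modification.

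The first ingredient is an elementary \emph{refinement step}. Suppose $X = D_1 \sqcup \dots \sqcup D_m$ is a partition of $X$ into $m$ dense sets, and pick any dense open $U \subseteq X$. As an open subspace of $X$, $U$ is again $(m+1)$-resolvable, so we may write $U = F_1 \sqcup \dots \sqcup F_{m+1}$ with each $F_j$ dense in $U$; set $D_j' := (D_j \setminus U) \cup F_j$ for $j \le m$ and $D_{m+1}' := F_{m+1}$. Then $D_1', \dots, D_{m+1}'$ are pairwise disjoint and cover $X$, and each is dense in $X$, since any nonempty open $V$ meets the dense set $U$ in a nonempty relatively open set, which in turn meets the dense-in-$U$ set $F_j \subseteq D_j'$. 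The crucial feature is that the new partition agrees with the old one outside $U$.

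Starting from $\mathcal P_0 = \{X\}$ and applying this repeatedly, one builds partitions $\mathcal P_n$ of $X$ into $n+1$ dense sets, with $\mathcal P_{n+1}$ differing from $\mathcal P_n$ only inside a dense open set $U_{n+1}$ selected during the construction. The target partition is then $\mathcal P_\infty$, whose pieces consist of the points that are \emph{eventually} in a fixed piece of the $\mathcal P_n$. Its pieces are automatically pairwise disjoint --- two pieces of each $\mathcal P_n$ are disjoint, so no point is eventually in two of them --- and there are only countably many of them, so the whole issue is whether $\mathcal P_\infty$ is well defined and each of its pieces is dense.

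That is precisely the hard part, and I expect it to be the main obstacle. Two requirements must be met at once by choosing the $U_n$ and the internal resolutions $F_1, \dots, F_{n+2}$ adaptively: (i) the membership of each point must \emph{stabilize}, so that $\mathcal P_\infty$ makes sense; and (ii) each limiting piece must remain \emph{dense} --- which is not automatic, since a pointwise eventual limit of dense sets can be empty. The natural device is a diagonal bookkeeping over a $\pi$-base $\mathcal B$ of $X$: list all pairs $(V, j)$ with $V \in \mathcal B$, each occurring infinitely often, and at stage $n$, besides performing the refinement, \emph{freeze} a point of the current $V$ into the current piece by excluding it from all later modification regions $U_m$ with $m > n$. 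One then has to arrange that the frozen points accumulate enough to catch every piece in every member of $\mathcal B$ --- routine for countable $X$ (where one can moreover thin the $U_n$ down to $\bigcap_n U_n = \emptyset$), but genuinely delicate in general, since in a Baire space $\bigcap_n U_n$ cannot be emptied and the residue must be dealt with separately. Threading this needle is, I believe, where the real work of the proof lies.
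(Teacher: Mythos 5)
The paper does not prove this statement at all: it is quoted from Illanes \cite{I} and used as a black box, so there is no internal proof to compare with. Judged on its own terms, your proposal is not a proof --- you set up the machinery and then explicitly leave the decisive step open, and that step is exactly where the argument breaks. The refinement step itself is correct (open subspaces inherit $k$-resolvability, so the local re-partition works and the new pieces are dense). The passage to the limit is not. First, stabilization genuinely fails: each modification region $U_n$ must be dense open in order for the new piece $F_{n+1}$ to be dense in $X$, so in any Baire space $\bigcap_n U_n$ is dense and a dense set of points changes piece infinitely often; moreover, even where points do stabilize, nothing makes the limit pieces dense --- the stage-$n$ version of piece $j$ can be entirely swallowed by the dense open set $\bigcup_{m>n}U_m$ and reassigned, so a limit piece can perfectly well be empty. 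Second, the repair you sketch --- freezing one point per stage along an enumeration of pairs $(V,j)$ with $V$ in a $\pi$-base --- is blocked by cardinality: the theorem concerns arbitrary topological spaces, which need not have countable $\pi$-weight, and in $\omega$ stages you can freeze only countably many points, which cannot witness density of $\omega$ pieces against uncountably many basic open sets. (A further obstacle: without $T_1$, deleting the frozen points from $U_m$ destroys its openness, which the refinement step requires.) So the scheme is confined at best to spaces of countable $\pi$-weight, and even there the residue $\bigcap_n U_n$ is left untreated.

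The known arguments avoid taking a limit of partitions altogether. Via Elkin's theorem (theorem \ref{tm:el} of this paper) it suffices to find one nonempty $\omega$-resolvable subspace inside every nonempty open set, and every open subspace again satisfies the hypothesis, so one may as well exhibit $\omega$ pairwise disjoint dense sets directly by a combinatorial analysis of the finite partitions. If you want to rescue your iterative idea, the statement to aim for is a splitting lemma: a space that is $k$-resolvable for every $k$ can be partitioned into two dense pieces one of which is again $k$-resolvable for every $k$. Iterating that lemma produces the infinite disjoint family of dense sets with no limit step and no stabilization issue; proving it is where the real content of Illanes' theorem lies, and your writeup does not contain it.
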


Now we turn to formulating and proving some new results that will be needed later,
in the proofs of our main theorems \ref{tm:gen_e_res} and \ref{tm:steppingup}.
They may turn out to be of independent interest. 

First we fix some, rather standard, notation: If $A$
is any subset of a topological space then $A'$ denotes the derived set of $A$, that is
the set of all accumulation points of $A$, while we use $\accu A$ to denote the set of
all {\em complete accumulation} points of $A$.

The following rather technical result is new, although it owes its basic idea to Filatova's work in \cite{F}.

\begin{lemma}\label{lm:2res}
Let  $X$ be a regular space,
$\kappa$ be a regular cardinal, and consider
the family
\begin{equation}
\mc D=\big\{D\in \br X;{\kappa};: D' = \accu D ,\, D\,\cap\, \accu D=\empt, \text{ and }\,
\forall\, E\in \br D;{\kappa};\ (\accu E\ne \empt) \big\}.
\end{equation}
If $X$ has a dense subset $Y$ with $|Y| \le \kappa$ such that for each point $y \in Y$
there is a set $D \in \mathcal{D}$ with $y \in \accu D$ then $X$ is 2-resolvable.
\end{lemma}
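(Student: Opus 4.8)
The plan is to build two disjoint dense subsets $A_0,A_1$ of $X$. As $Y$ is dense, it suffices to arrange $Y\subs\overline{A_0}\cap\overline{A_1}$, since then $\overline{A_i}\supseteq\overline{Y}=X$ for $i=0,1$. I begin with two easy remarks that the hypothesis yields. First, $X$ is crowded: an isolated point belongs to every dense set, in particular to $Y$, but no isolated point is a complete accumulation point of anything; hence also no $y\in Y$ is isolated in $Y$, for an open set meeting $Y$ only in $y$ would be contained in $\overline{\{y\}}=\{y\}$. Second, $\mc D$ is stable under removing fewer than $\kappa$ points, and this does not spoil the relation ``$y\in\accu D$'': if $D\in\mc D$, $S\in\br X;{<\kappa};$ and $y\in\accu D$, then $D\setm S\in\mc D$ -- here the clause $D'=\accu D$ is used to see that $(D\setm S)'=\accu{(D\setm S)}$ -- and since every neighbourhood of $y$ meets $D$ in $\kappa$ points it still meets $D\setm S$ in $\kappa$ points, so $y\in\accu{(D\setm S)}$.

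Now fix an enumeration $Y=\{y_\alpha:\alpha<\lambda\}$ with $\lambda=|Y|\le\kappa$ and run a transfinite recursion producing increasing disjoint pairs $A_0^\alpha,A_1^\alpha$ with $y_\beta\in\overline{A_0^\alpha}\cap\overline{A_1^\alpha}$ for all $\beta<\alpha$; at limits one takes unions, which is harmless because $\kappa$ is regular. The whole difficulty is concentrated in the successor step, which I would phrase as follows. Given disjoint $A_0,A_1$ and $y=y_\alpha$, delete the already-committed points from $D_y$ to obtain (by the second remark) a set $D\in\mc D$ with $y\in\accu D$, and seek disjoint $F_0,F_1$, disjoint respectively from $A_1$ and $A_0$, with $y\in\overline{F_0}\cap\overline{F_1}$; it then suffices to add $F_i$ to $A_i$. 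Consider the family $\mc J=\{E\subs D:y\notin\accu E\}$. It is a proper ideal on $D$ (properness because $y\in\accu D$) that contains $\br D;{<\kappa};$, since a set of size $<\kappa$ has no complete accumulation point. If $\mc J$ is \emph{not} maximal, choose $A\notin\mc J$ with $D\setm A\notin\mc J$; then $y\in\accu A\cap\accu{(D\setm A)}$, so $F_0:=A$ and $F_1:=D\setm A$ work, and these are ``robust'' witnesses, in that $y$ remains in their closures after any future deletion of fewer than $\kappa$ points.

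The delicate case is $\mc J$ maximal -- equivalently, the trace of the neighbourhood filter of $y$ on $D$ generates a uniform ultrafilter on $D$, and then no splitting of $D$ as above is possible. This is exactly where the density of $Y$ must be used. Since $y$ is not isolated in $Y$, every neighbourhood of $y$ contains some $z\in Y\setm\{y\}$, and hence contains $\kappa$ points of $D_z$; so instead of splitting $D$ one keeps an $\mc J$-positive piece of $D$ as $F_0$ and assembles $F_1$ from carefully chosen pieces of the witnesses $D_z$ of such nearby points $z$. The third defining clause of $\mc D$, namely $\forall E\in\br D;{\kappa};(\accu E\ne\empt)$ applied to the relevant $D_z$'s, is what keeps this assembling process from running out of usable points.

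I expect the genuine obstacle to be the global bookkeeping forced by this last case: a colour-$1$ witness may have to have full size $\kappa$, so along the recursion the committed sets can grow to size $\kappa$, and one must guarantee that they never swallow the $\kappa$-sized $\mc D$-witnesses still needed at later stages. The natural device is to use the third clause in the other direction -- any $\kappa$-sized chunk of a future witness $D'$ that has already been committed must itself have a complete accumulation point, which together with $(D')'=\accu{D'}$ and $D'\cap\accu{D'}=\empt$ pins down its position enough to show it was not committed against $D'$ -- combined with processing each $y\in Y$ cofinally often along a recursion of length $\kappa$, and using the regularity of $X$ to separate the two colours by shrinking to closed neighbourhoods. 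Getting this coordination to run cleanly is, I anticipate, the substance of the proof.
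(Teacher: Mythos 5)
There is a genuine gap, and you have in fact located it yourself: both the maximal-ideal case and the global bookkeeping are left as hopes rather than arguments, and neither is routine. Your splitting strategy asks for two disjoint subsets of a single witness $D$, each having $y$ as a complete accumulation point; when the trace of the neighbourhood filter of $y$ on $D$ is a uniform ultrafilter this is impossible, and your proposed repair (assembling the second piece from witnesses $D_z$ of nearby $z\in Y$) is only a sketch. Worse, your reduction ``delete the already-committed points from $D_y$'' is only licensed by your own remark when fewer than $\kappa$ points have been committed, but after a single successor step the committed sets may already have size $\kappa$, so the deletion can destroy the witness entirely. These are not presentational issues; they are the two places where the proof actually has to do work.

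The idea you are missing is that no splitting of $D$ is needed at all: the set $\accu D$ is itself the second witness. If $y\in\accu D$ then $y$ is literally an element of $\accu D$, while $y\in D'=\overline D\setm D$, and $D\cap\accu D=\empt$; so assigning $D$ to one colour and $\accu D$ to the other puts $y$ in the closure of both colours using disjoint contributions from a single member of $\mc D$. This is exactly what the paper does: it builds $E_{\alpha,i}=\bigcup\{D_\beta: i_\beta=i\}\cup\bigcup\{\accu{D_\beta}: i_\beta=1-i\}$ and at each step chooses one $D_\alpha\in\mc D$ and a side $i_\alpha$. The remaining difficulty is only to keep the new pair $(D_\alpha,\accu{D_\alpha})$ disjoint from what was committed before, and this is where the clause $\forall E\in\br D;{\kappa};\,(\accu E\ne\empt)$ is really used: if $y_\alpha$ lies in $\overline{E_{\alpha,0}}\setm\overline{E_{\alpha,1}}$, one separates $y_\alpha$ from $\overline{E_{\alpha,1}}$ by a closed neighbourhood $\overline U$ and observes that $|U\cap D_\beta|=\kappa$ would force $\empt\ne\accu{(U\cap D_\beta)}\subs\overline U\cap\accu{D_\beta}\subs\overline U\cap E_{\alpha,1}$, a contradiction; hence each old $D_\beta$ meets $U$ in fewer than $\kappa$ points, regularity of $\kappa$ makes the total overlap small, and $U\cap D$ can be trimmed to a new member of $\mc D$ still completely accumulating at $y_\alpha$. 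That observation is precisely the solution to the ``swallowing'' problem you anticipated, and it is unavailable on your route because you never place the sets $\accu{D_\beta}$ into the opposite colour, which is what makes the separation argument bite.
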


\newcommand{\dkor}{D^*}

\begin{proof}[Proof of the lemma]
First, let us fix a $\kappa$-type enumeration
of $Y$ (with repetitions permitted): $Y = \{y_{\alpha}:{\alpha}<{\kappa}\}$.
We shall then, by induction on ${\alpha} < {\kappa}$, define $D_{\alpha}\in \mc D$
and $i_{\alpha}\in 2$ in
such a way that, putting for ${\alpha}\le {\kappa}$ and $i<2$
\begin{equation}\notag
{E}_{{\alpha},i}=\bigcup\{D_{\beta}: {\beta}<{\alpha} \mbox{ and } i_{\beta}=i\}
\cup \bigcup\{\accu D_{\beta}: {\beta}<{\alpha}  \mbox{ and }
i_{\beta} = 1-i\},
\end{equation}

\medskip

\noindent for any $\alpha \le \kappa$ we have both
\begin{equation}\label{eq:disj_d}
{E}_{\alpha,0} \cap  {E}_{\alpha,1}=\empt
\end{equation}
and
\begin{equation}\label{eq:cover_d}
\{y_{\beta}:{\beta}<{\alpha}\}\subs \overline{{E}_{\alpha,0}}
\cap  \overline{{E}_{\alpha,1}}.
\end{equation}

\medskip

To start with, we pick $D_0 \in \mathcal{D}$ with $y_0 \in \accu D_0$ and put $i_0 = 0$.
Then (\ref{eq:disj_d}) and (\ref{eq:cover_d}) are trivially satisfied.
Now, assume that $0 < \alpha < \kappa$, moreover
$\{D_{\beta}:{\beta}<{\alpha}\}$ and $\{i_{\beta}:{\beta}<{\alpha}\}$ have been defined
and satisfy the inductive hypotheses (\ref{eq:disj_d}) and (\ref{eq:cover_d}).

We now distinguish three cases. First, if
\begin{equation}\notag
y_{\alpha} \in  \overline{ {E}_{{\alpha},0}} \cap  \overline{{E}_{{\alpha},1}}
\end{equation}
then we may simply let $D_{\alpha}=D_0$ and $i_{\alpha}=i_0  = 0$.
Clearly, then (\ref{eq:disj_d}) and (\ref{eq:cover_d}) will remain valid for $\alpha+1$.

Next, if
\begin{equation}\notag
y_{\alpha}\notin  \overline{{E}_{{\alpha},0}} \cup  \overline{{E}_{{\alpha},1}}
\end{equation}
then, using the regularity of $X$,
we can pick an open neighbourhood $U$ of $y_{\alpha}$ for which $\overline{U} \cap\,
\overline{\bigcup_{\beta<\alpha} D_\beta} =\empt$. Now choose $D \in \mc D$
with $y_{\alpha}\in \accu D$ and set $D_\alpha = U \cap D$, moreover let $i_{\alpha}=0$.
Again, it is easy to check that with these choices (\ref{eq:disj_d}) and (\ref{eq:cover_d}) remain valid.

If none of the above two alternatives hold then
$y_\alpha \in \overline{{E}_{{\alpha},0}} \vartriangle  \overline{{E}_{{\alpha},1}}$, i.e.
there is $j\in 2$ such that $y_\alpha \in \overline{{E}_{{\alpha},j}} \setm  \overline{{E}_{{\alpha},1-j}}$.
Suppose e.g. that $j=0$, the case $j=1$ can be handled symmetrically.

We may then choose an open neighbourhood $U$ of $y_{\alpha}$ such that $\overline{U} \cap \overline{E_{\alpha,1}} = \empt$
and a set $D \in \mc D$ with $y_{\alpha}\in \accu D$. For every $\beta  < \alpha$ we have $|U \cap D_\beta| < \kappa$ :
Indeed, if $i_\beta = 0$ then $|U \cap D_\beta| = \kappa$ would imply $$\empt \ne  (U \cap D_\beta)^\circ
\subs \overline{U} \cap \accu D_\beta \subs \overline{U} \cap E_{\alpha,1},$$ a contradiction.
And if $i_\beta = 1$ then we even have $U \cap D_\beta = \empt$.

Consequently, as $\kappa$ is regular, we have $|\bigcup \{D \cap D_\beta : \beta < \alpha \}| < \kappa$,
hence $D_\alpha = U \cap D\, \setm\, \bigcup \{D_\beta : \beta < \alpha \} \in \mc D$ and $y_ \alpha \in \accu D_\alpha$.
Let us now put $i_\alpha = 1$. Then $y_\alpha \in {D_\alpha}^\circ \subs \overline{D_\alpha} \subs \overline{E_{\alpha+1,1}}$ implies
$y_\alpha \in \overline{E_{\alpha+1,0}} \cap \overline{E_{\alpha+1,1}}$, hence
(\ref{eq:cover_d}) remains valid for $\alpha+1$.

To show the same for (\ref{eq:disj_d}),
note first that ${D_\beta}^\circ \subs \overline{E_{\alpha,1}}$,
hence $\overline{D_\alpha} \cap {D_\beta}^\circ = \empt$ holds for each $\beta < \alpha$. Moreover,
${D_\alpha}^\circ \subs \overline{U}$ implies ${D_\alpha}^\circ \cap D_\beta = \empt$
for any $\beta < \alpha$ with $i_\beta = 1$, which together with $D_\alpha \cap {D_\alpha}^\circ = \empt$
yields $$(E_{\alpha,0} \cup {D_\alpha}^\circ) \cap (E_{\alpha,1} \cup D_\alpha) = E_{\alpha+1,0} \cap E_{\alpha+1,1} = \empt.$$
Of course, if $j = 1$ then we shall have $i_\alpha = 0$.

After having completed the inductive construction,
it is trivial to conclude that $E_{\kappa,0}$ and $E_{\kappa,1}$ are two disjoint dense subsets of $X$.
\end{proof}

We shall use lemma \ref{lm:2res} in the proof of our main result, in the induction step of a procedure
where we move from $n$-resolvability to $n+1$-resolvability.

In our following preliminary result, rather than the extent $\ext(X)$, its ``hat" version $\widehat \ext(X)$ will appear.
We recall that $\widehat \ext(X)$ is defined as the smallest cardinal $\lambda$ such that $X$ has no
closed-and-discrete subset of size $\lambda$. Thus we clearly have $\widehat \ext(X) \le \ext(X)^+$,
moreover $\widehat \ext(X) \le \kappa$ is simply equivalent with the statement that for every set $A \in [X]^\kappa$
we have $A' \ne \empt$. We start with defining an auxiliary concept that will be needed in this result.

\begin{definition}
Let  $X$ be a   topological space  and $\kappa$ be a cardinal.
A subset $H\subs X$ is  called {\em \appr {{\kappa}} in $X$}
iff there are $\kappa$ many pairwise disjoint
sets $\{X_{\alpha}:{\alpha}<{\kappa}\}\subs \br X;{\kappa};$ such that
\begin{equation}\notag
\forall\,Y \in [X_\alpha]^\kappa\,({Y}^\circ \ne \empt)\, \mbox{ and }\,  H=\accu{(X_{\alpha})}
\end{equation}
hold true for all ${\alpha}<{\kappa}$.
\end{definition}

The following lemma shows that this definition is not empty.

\begin{lemma}\label{lm:small_acc}
Assume that  $\kappa$ is a regular cardinal and $X$ is a space for which
$\widehat \ext(X) \le \kappa$.  Then

\begin{enumerate}[(1)]
\item $A \in [X]^\kappa$ and $|A'|<{\kappa}$ imply $\accu A\ne \empt$;
\item
if $A \in [X]^\kappa$ is such that $|\accu A|<{\kappa}$ and
$\accu B\ne \empt$ for all $B\in \br A;{\kappa};$
then there is a subset $H \subs A^\circ$ that is \appr {\kappa} in $X$.
\end{enumerate}
\end{lemma}

\begin{proof}
(1) Every point $x\in A'\setm \accu A$
has an open neighbourhood $U_x$ such that $|U_x\cap A|<{\kappa}$.
Then $U=\bigcup\{U_x:x\in A' \setm \accu A\}$ covers $A'\setm \accu A$ and $|A\cap U|<{\kappa}$
because  $\kappa$ is regular.  So we have $|A\setm U| = {\kappa}$
and hence $\empt \ne (A\setm U){}'=\accu{A}$ by
$\kappa \ge \widehat \ext(X)$.

\medskip
\noindent (2)
We start by fixing $\kappa$ pairwise disjoint sets
$\{A_\alpha : \alpha < \kappa \} \subs [A]^\kappa$ and for any
${\alpha}<{\beta}<{\kappa}$ we write
\begin{equation}\notag
A_{{\alpha},{\beta}}=\bigcup_{{\alpha} \le \nu <\beta}A_\nu.
\end{equation}
For fixed ${\alpha}<{\kappa}$ the sequence
\begin{equation}\notag
\{\accu{A_{{\alpha},{\beta}}}:\beta \in {\kappa} \setm \alpha\}
\end{equation}
is increasing and hence
must stabilize since $|A^\circ|< {\kappa}$. This means that there is an ordinal $f({\alpha})<{\kappa}$
such that
\begin{equation}\notag
\accu{A_{{\alpha},{\beta}}}=\accu{A_{{\alpha},f({\alpha})}}
\end{equation}
for all ${\beta} \in {\kappa} \setm f(\alpha)$.
Similarly, the sequence
\begin{equation}\notag
\{\accu{A_{{\alpha},f({\alpha})}}:{\alpha}<{\kappa}\}
\end{equation}
is decreasing and hence it stabilizes: There is an ordinal ${\alpha}^*<{\kappa}$
such that
\begin{equation}\notag
\accu{A_{{\alpha^*},f({\alpha^*})}}=\accu{A_{{\alpha},f({\alpha})}}
\end{equation}
whenever ${\alpha}^* \le {\alpha}<{\kappa}$. We claim that the set
$H=\accu{A_{{\alpha^*},f({\alpha^*})}}$ is
\appr{{\kappa}}  in $X$.

To see this, choose $I\in \br {\kappa}\setm {\alpha}^*; {\kappa};$
in such a way that for any ${\alpha},{\beta}\in I$ with $\alpha < \beta$ we have $f({\alpha})<{\beta}$.
This is possible because $\kappa$ is regular. Then the sets
\begin{equation}\notag
\{A_{{\alpha},f({\alpha})}:{\alpha}\in I\}\subs \br X;{\kappa};
\end{equation}
are pairwise disjoint and, by definition, for all ${\alpha}\in I$ we have both
\begin{equation}\notag
\forall\,B \in [A_{\alpha,f(\alpha)}]^\kappa\,({B}^\circ \ne \empt)\, \mbox{ and }\,\accu{A_{{\alpha},f({\alpha})}}=H.
\end{equation}
\end{proof}

Our next resolvability result uses \appr {\kappa} sets in an essential way.

\begin{theorem}\label{tm:app}
Assume that  $X$ is a space, ${\kappa}=|X|$ is a regular cardinal,
 moreover $\mathcal{H}$ is a disjoint
family of sets \appr {\kappa} in $X$ such that $\bigcup \mathcal{H}$
is dense in $X$. Then the space $X$ is $\kappa$-resolvable.
\end{theorem}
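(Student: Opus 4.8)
The plan is to fix, for every $H\in\mc H$, a witnessing system $\langle X^H_{\alpha}:{\alpha}<{\kappa}\rangle$ as in the definition of a ${\kappa}$-approachable set, and then to produce ${\kappa}$ pairwise disjoint dense subsets of $X$ directly. The defining property of ${\kappa}$-approachable sets enters only through one observation: if $H\ne H'$ are members of $\mc H$, then $|X^H_{\alpha}\cap X^{H'}_{\beta}|<{\kappa}$ for all ${\alpha},{\beta}<{\kappa}$. Indeed, if $Y=X^H_{\alpha}\cap X^{H'}_{\beta}$ had size ${\kappa}$, then $Y\in[X^H_{\alpha}]^{\kappa}$ would give $\accu Y\ne\empt$, while $|Y|={\kappa}=|X^H_{\alpha}|=|X^{H'}_{\beta}|$ forces $\accu Y\subs\accu{X^H_{\alpha}}\cap\accu{X^{H'}_{\beta}}=H\cap H'=\empt$ since $\mc H$ is disjoint --- a contradiction. (Two cells of the same $H$ are disjoint by definition, and deleting fewer than ${\kappa}$ points from a cell $X^H_{\alpha}$ leaves its set of complete accumulation points equal to $H$, again because $|X^H_{\alpha}|={\kappa}$.) Note also that $X^H_0\in[X^H_0]^{\kappa}$ forces $H=\accu{X^H_0}\ne\empt$, so $\mc H$ is a disjoint family of non-empty subsets of $X$; hence $|\mc H|\le{\kappa}$ and $|\mc H\times{\kappa}|={\kappa}$. (If $\mc H=\empt$ then $X=\empt$ and there is nothing to prove, so assume $\mc H\ne\empt$.)

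Next I would fix a well-ordering $<^*$ of $\mc H\times{\kappa}$ of order type ${\kappa}$, so that every $(H,i)\in\mc H\times{\kappa}$ has fewer than ${\kappa}$ many $<^*$-predecessors, and set
\begin{equation}\notag
P^H_i = X^H_i\setm\bigcup\{X^{H'}_j : (H',j)<^*(H,i)\}
\end{equation}
for each $(H,i)\in\mc H\times{\kappa}$, finally letting $D_i=\bigcup\{P^H_i : H\in\mc H\}$ for $i<{\kappa}$. The set removed from $X^H_i$ in forming $P^H_i$ is small: among the fewer than ${\kappa}$ cells $X^{H'}_j$ with $(H',j)<^*(H,i)$, each one is either disjoint from $X^H_i$ (the case $H'=H$, which forces $j\ne i$) or meets $X^H_i$ in a set of size $<{\kappa}$ (the case $H'\ne H$, by the observation above); since ${\kappa}$ is regular, the union of all these intersections has size $<{\kappa}$. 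Hence $|X^H_i\setm P^H_i|<{\kappa}$, and therefore $\accu{P^H_i}=\accu{X^H_i}=H$.

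It remains to verify that $\langle D_i:i<{\kappa}\rangle$ consists of pairwise disjoint dense sets. For density, fix $i<{\kappa}$ and $H\in\mc H$: then $H=\accu{P^H_i}\subs\overline{P^H_i}\subs\overline{D_i}$, so $\bigcup\mc H\subs\overline{D_i}$, and as $\bigcup\mc H$ is dense this gives $\overline{D_i}=X$. For disjointness, let $i\ne i'$ and $H,H'\in\mc H$. If $H=H'$ then $P^H_i\cap P^H_{i'}\subs X^H_i\cap X^H_{i'}=\empt$. If $H\ne H'$, assume say $(H,i)<^*(H',i')$; then all of $X^H_i$ is among the cells removed in forming $P^{H'}_{i'}$, so $P^H_i\cap P^{H'}_{i'}\subs X^H_i\cap P^{H'}_{i'}=\empt$. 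Hence $D_i\cap D_{i'}=\empt$, and $\langle D_i:i<{\kappa}\rangle$ witnesses that $X$ is ${\kappa}$-resolvable. The point that needs care --- and, I expect, the main obstacle --- is the bookkeeping: the deletions must be indexed by the ${\kappa}$ many pairs in $\mc H\times{\kappa}$ under an order of type ${\kappa}$, rather than, say, handling each $H$ together with all of its ${\kappa}$ contributions at once (in which case the already-deleted material could be of size ${\kappa}$ and swallow a whole cell). With the pairwise indexing only fewer than ${\kappa}$ cells are ever subtracted from a given $X^H_i$, which is exactly what makes the small-intersection bound applicable and keeps $P^H_i$ a subset of $X^H_i$ with the same set of complete accumulation points.
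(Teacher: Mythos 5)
Your proof is correct and follows essentially the same route as the paper's: you establish that the witnessing cells from different $H$'s meet in sets of size $<\kappa$, refine the resulting almost disjoint family of $\kappa$ many $\kappa$-sized sets to a genuinely disjoint one by removing $<\kappa$ points from each (the paper invokes this as ``almost disjoint implies essentially disjoint'' for regular $\kappa$, while you carry it out explicitly via a well-ordering of $\mc H\times\kappa$ in type $\kappa$), and then take $D_i=\bigcup_H P^H_i$, whose density and pairwise disjointness you verify exactly as the paper does.
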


\begin{proof}
For each $H \in \mathcal{H}$ let us fix a disjoint family $\{A_{H,\alpha} : \alpha < \kappa \} \subs [X]^\kappa$
which witnesses that $H$ is \appr {\kappa} in $X$, i.e.
$$\forall\,B \in [A_{H,\alpha}]^\kappa\,({B}^\circ \ne \empt)\, \mbox{ and }\,{A_{H,\alpha}}^\circ = H$$ for all $\alpha < \kappa$.

Note that if $H,K \in \mathcal{H}$ and $\alpha, \beta \in \kappa$ with $\langle H,\alpha \rangle  \ne \langle K,\beta \rangle$ then we have
$|A_{H,\alpha} \cap A_{K,\beta}| < \kappa$. Indeed, if $H = K$ then $A_{H,\alpha} \cap A_{K,\beta} = \empt$. And
if $H \ne K$ then  $|A_{H,\alpha} \cap A_{K,\beta}| = \kappa$ would imply
$\empt \ne (A_{H,\alpha} \cap A_{K,\beta})^\circ \subs H \cap K$, contradicting $H \cap K = \empt$.
This means that the family $$\mathcal{A} = \{A_{H,\alpha} : H \in \mathcal{H},\,\alpha < \kappa  \} \subs [X]^\kappa$$
is {\em almost disjoint}.

But $|\mathcal{H}| \le |X| = \kappa$ implies $|\mathcal{A}| = \kappa$, and then by the regularity of $\kappa$
it follows that $\mathcal{A}$ is also {\em essentially disjoint}. In other words, this means that for every
pair $\langle H,\alpha \rangle$ there is a set $F_{H,\alpha} \in [A_{H,\alpha}]^{<\kappa}$ such that the
collection $$\{B_{H,\alpha} = A_{H,\alpha} \setm F_{H,\alpha} : \langle H,\alpha \rangle \in \mathcal{H} \times \kappa  \}$$
is already disjoint. Note also that for each $\langle H,\alpha \rangle \in \mathcal{H} \times \kappa$ we have
$${B_{H,\alpha}}^\circ = {A_{H,\alpha}}^\circ = H\,.$$

We claim that for every $\alpha < \kappa$ the set $$D_\alpha = \bigcup \{B_{H,\alpha} : H \in \mathcal{H} \}$$
is dense in $X$. Indeed, for any $U \in \tau^+(X)$ there is a set $H \in \mathcal{H}$ with
$H \cap U \ne \empt$, so we may pick a point $x \in H \cap U$. But then $x \in H = B_{H,\alpha}^\circ$ implies
$|U \cap B_{H,\alpha}| = \kappa$, consequently $U \cap D_\alpha \ne \empt$. Thus $\{D_\alpha : \alpha < \kappa \}$
is a family of $\kappa$ many pairwise disjoint dense sets in $X$.
\end{proof}

>From lemma \ref{lm:small_acc} and theorem \ref{tm:app} we may immediately deduce the following corollary that
will be needed in the proof of theorem \ref{tm:gen_e_res}. Maybe ironically, this does not even mention
\appr {\kappa} sets, but its proof does.

\begin{corollary}\label{A'}
Assume that  $\kappa$ is a regular cardinal and $X$ is a space for which
$$\widehat \ext(X) \le |X| = \kappa.$$ If the family $$\mathcal{A}' = \{A' : A \in [X]^\kappa\, \mbox{ and }\, |A'| < \kappa\}$$
is a $\pi$-network in $X$ then the space $X$ is $\kappa$-resolvable.
\end{corollary}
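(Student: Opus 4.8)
The plan is to deduce the corollary from Theorem \ref{tm:app} by manufacturing, out of the members of the $\pi$-network $\mathcal{A}'$, a disjoint family of $\kappa$-approachable sets with dense union. The bridge from a set of the form $A'$ to a $\kappa$-approachable set will be Lemma \ref{lm:small_acc}(2), and the hypothesis $\widehat\ext(X)\le\kappa$ enters precisely to verify the assumptions of that lemma, via Lemma \ref{lm:small_acc}(1).

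First I would check that every $A\in[X]^\kappa$ with $|A'|<\kappa$ satisfies the hypotheses of Lemma \ref{lm:small_acc}(2). Since $\accu A\subs A'$, we have $|\accu A|<\kappa$; and for every $B\in[A]^\kappa$ we have $B'\subs A'$, so $|B'|<\kappa$, whence $\accu B\ne\empt$ by Lemma \ref{lm:small_acc}(1). Thus Lemma \ref{lm:small_acc}(2) yields a set $H_A\subs\accu A\subs A'$ that is $\kappa$-approachable in $X$, and I would fix one such $H_A$ for each such $A$. Note that $H_A\ne\empt$: if $\{X_\xi:\xi<\kappa\}$ witnesses that $H_A$ is $\kappa$-approachable, then applying the defining property with $Y=X_0\in[X_0]^\kappa$ gives $H_A=\accu{X_0}\ne\empt$.

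Next I would let $\mathcal{H}$ be a maximal disjoint subfamily of the collection $\{H_A:A\in[X]^\kappa\text{ and }|A'|<\kappa\}$ and claim that $\bigcup\mathcal{H}$ is dense in $X$. Were it not, some $U\in\tau^+(X)$ would be disjoint from $\bigcup\mathcal{H}$; since $\mathcal{A}'$ is a $\pi$-network, we could then choose $A\in[X]^\kappa$ with $|A'|<\kappa$ and $\empt\ne A'\subs U$, so that $\empt\ne H_A\subs A'\subs U$ is disjoint from every member of $\mathcal{H}$. This contradicts the maximality of $\mathcal{H}$ if $H_A\notin\mathcal{H}$, and contradicts $H_A\ne\empt$ (as $H_A$ would be disjoint from itself) if $H_A\in\mathcal{H}$. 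Hence $\bigcup\mathcal{H}$ is dense, and since $\kappa=|X|$ is regular, Theorem \ref{tm:app} applies to the disjoint family $\mathcal{H}$ of $\kappa$-approachable sets and gives that $X$ is $\kappa$-resolvable.

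I do not expect a real obstacle: the substantive work is already packaged in Lemma \ref{lm:small_acc} and Theorem \ref{tm:app}. The only points needing care are that the single inequality $|A'|<\kappa$ supplies both conditions required to invoke Lemma \ref{lm:small_acc}(2) — this is where $\widehat\ext(X)\le\kappa$ is genuinely used, through part (1) of that lemma — and the small observation that $\kappa$-approachable sets are automatically non-empty, which is what makes the maximality argument for the density of $\bigcup\mathcal{H}$ go through.
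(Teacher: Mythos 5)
Your proposal is correct and follows essentially the same route as the paper: pass from each $A'$ to a $\kappa$-approachable subset via lemma \ref{lm:small_acc} (using part (1) to verify the hypotheses of part (2)), take a maximal disjoint subfamily, and invoke theorem \ref{tm:app}. Your write-up merely makes explicit the hypothesis-checking and the non-emptiness of $\kappa$-approachable sets, which the paper leaves implicit.
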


\begin{proof}
By lemma \ref{lm:small_acc} every member $A'$ of $\mathcal{A}'$ includes a set that is \appr {\kappa} in $X$,
hence if $\mathcal{A}'$ is a $\pi$-network in $X$ then so is the family $\mathcal{G}$ of all the sets
that are \appr {\kappa} in $X$.
But then the union of any maximal disjoint subfamily $\mathcal{H} \subs \mathcal{G}$ is clearly dense in $X$,
hence all the assumptions of theorem \ref{tm:app} are satisfied.
\end{proof}

We now turn to another circle of preliminary results that will be used in the proof of our
main theorem \ref{tm:gen_e_res}. Again, we have to start with some definitions.

\begin{definition}\label{df:kappast_closed}
If $X$ is a topological space,
$Y\subs X$, and $\kappa$ is an infinite cardinal, then we call
\begin{equation}\notag
\leka{Y}=\bigcup\{\overline{S}:S\in \br Y;<\kappa;\}
\end{equation}
the $<\,\kappa$-closure of $Y$ in $X$.
We say that $Y$ is  {\em $<{\kappa}$-closed} in $X$ iff $Y=\leka Y$.
If $\kappa = \mu^+$ then instead of $<\,\mu^+$-closure (resp. $<{\mu^+}$-closed)
we simply say $\mu$-closure (resp. $\mu$-closed).
\end{definition}

\begin{definition}\label{df:chain_decomposition}
A {\em chain decomposition of length $\beta$} (for some ordinal $\beta$)
of a set $X$ is an {\em increasing and continuous} sequence
$s = \<X_{\alpha}:{\alpha}<{\beta}\>\,$ such that
$X=\bigcup\{X_{\alpha}:{\alpha}<{\beta}\}$.
(Continuity means that $X_\delta = \bigcup\{X_{\alpha}:{\alpha}<\delta\}$
holds for any limit ordinal $\delta < \beta$. Since we also consider $0$ a limit
ordinal, this implies $X_0 = \empt$.)
\end{definition}

Clearly, if $s = \<X_{\alpha}:{\alpha}<{\beta}\>\,$ is a chain decomposition of $X$
and $Y \subs X$ then  $s\upharpoonright Y = \<Y \cap X_{\alpha}:{\alpha}<{\beta}\>\,$
is a chain decomposition of $Y$. Moreover, if $C \subs \beta$ is a cub (closed and unbounded)
subset of $\beta$ and $C = \{\gamma_i : i < \delta \}$ is the increasing enumeration of $C$
then $s[C] = \< X_{\gamma_i} : i < \delta\>$ is again a chain decomposition of $X$.

\begin{lemma}\label{lm:decomp3}
Assume that $\kappa=\cf(\kappa)\le {\lambda}$ are infinite cardinals
and $X$ is a topological space with $|X|={\lambda}$.
Then $X$ has a chain decomposition $s = \<X_{\alpha}:{\alpha}< \cf(\lambda)\>\,$ such that
$\{X_{\alpha}:{\alpha}<{\cf({\lambda})}\}\subs \br X;<{{\lambda}};$, moreover
\begin{enumerate}[(C1)]
\item $\,\,X_{\alpha}\cap \leka {X\setm X_{\alpha}}\subs
\leka {X_{\alpha+1}\setm X_{\alpha}}$ for each ${\alpha}<{\cf({\lambda})}$.
\end{enumerate}

\medskip

If, in addition, $X$ is $\pi$-regular and not $\cf({\lambda})$-resolvable
then there are a cub set $C \subs \cf(\lambda)$ and a regular closed set
$Y\in \rcp{X}$ such that we also have
\begin{enumerate}[(C2)]
\item $\,\,Y\setm X_{\alpha}$ is $< \kappa$-closed for each ${\alpha} \in C$.
\end{enumerate}

Hence $Y$ has a chain-decomposition
$\{Y_{\alpha}:{\alpha}<{\cf({\lambda})}\}\subs \br Y;<{{\lambda}};$
such that
\begin{equation}\label{C2}
\text{$\,\,Y\setm Y_{\alpha}$ is $< \kappa$-closed for all ${\alpha}<{\kappa}$ }
\end{equation}

\end{lemma}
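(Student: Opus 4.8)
First, for the chain decomposition satisfying (C1), I would fix an increasing sequence $\langle W_\gamma : \gamma < \cf(\lambda)\rangle$ of subsets of $X$ with $\bigcup_\gamma W_\gamma = X$ and $|W_\gamma| < \lambda$ for every $\gamma$ (possible exactly because $\cf(|X|) = \cf(\lambda)$), and then define $\langle X_\alpha : \alpha < \cf(\lambda)\rangle$ by recursion: let $X_0 = \empt$, take unions at limits, and at a successor set
\[
X_{\alpha+1} \;=\; X_\alpha \,\cup\, W_{\alpha+1} \,\cup\, \bigcup\{\,S_x : x \in X_\alpha \cap \leka{X\setminus X_\alpha}\,\},
\]
where, for each $x \in X_\alpha \cap \leka{X\setminus X_\alpha}$, $S_x$ is a fixed member of $[X\setminus X_\alpha]^{<\kappa}$ with $x \in \overline{S_x}$ (such an $S_x$ exists by the definition of the $<\kappa$-closure). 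Then $S_x \cap X_\alpha = \empt$, so $S_x \subseteq X_{\alpha+1}\setminus X_\alpha$ and $x \in \leka{X_{\alpha+1}\setminus X_\alpha}$, which is exactly (C1). The sequence is increasing and continuous with $X_0 = \empt$ and union $X$, so the only point to check is that $|X_\alpha| < \lambda$ throughout: at a successor we add at most $|W_{\alpha+1}| + \sum_x |S_x|$ points, and $\sum_x |S_x|$ is a sum of at most $|X_\alpha| < \lambda$ cardinals, each $< \kappa \le \lambda$, hence $<\lambda$ by the regularity of $\kappa$; at a limit $\delta < \cf(\lambda)$, $X_\delta$ is a union of fewer than $\cf(\lambda)$ sets of size $<\lambda$. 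This goes through uniformly, whether $\lambda$ is regular or singular and whether $\kappa < \lambda$ or $\kappa = \lambda$.

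Next, supposing $X$ is $\pi$-regular and not $\cf(\lambda)$-resolvable, I would proceed as follows. For a regular closed $Z$ set
\[
\mathrm{Bad}(Z) \;=\; \{\,\alpha < \cf(\lambda) : Z\setminus X_\alpha \text{ is not }{<}\kappa\text{-closed}\,\}\,;
\]
since $Z$ is closed, $\alpha \in \mathrm{Bad}(Z)$ iff $\overline S \cap X_\alpha \ne \empt$ for some $S \in [Z\setminus X_\alpha]^{<\kappa}$, and $0 \notin \mathrm{Bad}(Z)$ always. The crux is the claim $(\dagger)$: there is a regular closed $Y$ with $\mathrm{Bad}(Y)$ non-stationary in $\cf(\lambda)$. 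Granting $(\dagger)$, I would pick a club $C$ with $0 \in C$ and $C \cap \mathrm{Bad}(Y) = \empt$; this $C$ witnesses (C2). Writing $C = \{\gamma_\xi : \xi < \cf(\lambda)\}$ in increasing order, the remark following Definition~\ref{df:chain_decomposition} shows that $\langle X_{\gamma_\xi} : \xi < \cf(\lambda)\rangle$ is still a chain decomposition of $X$ into pieces of size $<\lambda$ satisfying (C1) (reindexing along a club only enlarges the right-hand side of (C1), as $X_{\gamma_\xi+1}\setminus X_{\gamma_\xi} \subseteq X_{\gamma_{\xi+1}}\setminus X_{\gamma_\xi}$), so $\langle Y \cap X_{\gamma_\xi} : \xi < \cf(\lambda)\rangle$ is a chain decomposition of $Y$ into pieces of size $<\lambda$ with $Y \setminus (Y\cap X_{\gamma_\xi}) = Y\setminus X_{\gamma_\xi}$ being $<\kappa$-closed for every $\xi$ (since $\gamma_\xi \in C$), which is \eqref{C2}.

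The main obstacle is $(\dagger)$, and I would prove it by contradiction. If $(\dagger)$ fails, then $\mathrm{Bad}(Z)$ is stationary in $\cf(\lambda)$ for \emph{every} regular closed $Z$; I claim this forces $X$ to be $\cf(\lambda)$-resolvable, contrary to hypothesis. Indeed, by Elkin's theorem (Theorem~\ref{tm:el}) it is enough to show that the $\cf(\lambda)$-resolvable subspaces of $X$ form a $\pi$-network, and since $X$ is $\pi$-regular (so $\rcp X$ is a $\pi$-network) it suffices to produce inside each regular closed $Z$ a nonempty $\cf(\lambda)$-resolvable subspace. Fix such a $Z$. For each $\alpha \in \mathrm{Bad}(Z)$ choose $x_\alpha \in X_\alpha$ and $S_\alpha \in [Z\setminus X_\alpha]^{<\kappa}$ with $x_\alpha \in \overline{S_\alpha}$; then $x_\alpha \notin S_\alpha$, so $x_\alpha$ is a genuine accumulation point of the small set $S_\alpha$, and, crucially, (C1) applied at stage $\alpha$ also gives $x_\alpha \in \leka{X_{\alpha+1}\setminus X_\alpha}$, so $x_\alpha$ accumulates to a small subset of the $\alpha$-th layer $X_{\alpha+1}\setminus X_\alpha$ as well, and distinct layers are pairwise disjoint. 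If $\cf(\lambda) > \omega$, a pressing-down (Fodor) argument applied to the regressive map sending $\alpha$ to the stage at which $x_\alpha$ first enters the chain concentrates the $x_\alpha$, along a stationary set of $\alpha$, inside one level $X_{\beta^*}$; using this together with the disjointness of the layers and, where needed, passing to regular closed subsets $Z'\subseteq Z$ (for each of which $\mathrm{Bad}(Z')$ is again stationary), one assembles $\cf(\lambda)$ pairwise disjoint dense subsets of $Z$ (or of a regular closed subset of $Z$). The case $\cf(\lambda)=\omega$, where Fodor's lemma is vacuous, has to be treated by a direct version of the same argument. This pressing-down-and-assembly step is where essentially all the work lies, and where non-resolvability and $\pi$-regularity are both genuinely used.
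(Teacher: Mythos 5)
Your first half — the recursive construction of $\<X_\alpha:\alpha<\cf(\lambda)\>$ by adding, at each successor step, a witness $S_x\in[X\setm X_\alpha]^{<\kappa}$ for every $x\in X_\alpha\cap\leka{X\setm X_\alpha}$ — is exactly the paper's construction and is correct. The second half, however, has a genuine gap: the whole content of the lemma is concentrated in your claim $(\dagger)$, and the argument you offer for it (if every $\mathrm{Bad}(Z)$ is stationary then $X$ is $\cf(\lambda)$-resolvable) is not carried out. The ``pressing-down-and-assembly'' paragraph names the difficulty rather than resolving it: after Fodor concentrates the points $x_\alpha$ into one layer $X_{\beta^*+1}\setm X_{\beta^*}$, you have stationarily many $\alpha$ for which some point of that fixed small layer accumulates to a small subset of $Z\setm X_\alpha$, and it is not at all clear how to manufacture $\cf(\lambda)$ pairwise disjoint dense subsets of a regular closed subset of $Z$ from this; the case $\cf(\lambda)=\omega$, where Fodor is unavailable, is waved away entirely. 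As written, the proposal does not prove the second statement of the lemma.

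The gap can be closed far more cheaply, and this is what the paper does. For $A\subs\cf(\lambda)$ put $R_A=\bigcup_{\alpha\in A}(X_{\alpha+1}\setm X_\alpha)$. Partitioning $\cf(\lambda)$ into $\cf(\lambda)$ pairwise disjoint cofinal pieces gives $\cf(\lambda)$ pairwise disjoint sets $R_A$; if each $R_A$ with $A$ cofinal were dense, $X$ would be $\cf(\lambda)$-resolvable. Hence there are a cofinal $A\subs\cf(\lambda)$ and $U\in\tau^+(X)$ with $U\cap R_A=\empt$. Now let $F\subs U$ be closed, $\alpha\in A$, and suppose $x\in\leka{F\setm X_\alpha}\cap X_\alpha$. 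Then $x\in X_\alpha\cap\leka{X\setm X_\alpha}$, so (C1) yields $S\in[X_{\alpha+1}\setm X_\alpha]^{<\kappa}$ with $x\in\overline S$; since $x\in U$ and $U$ is open, $U\cap S\ne\empt$, contradicting $U\cap R_A=\empt$ because $S\subs R_A$. So every $\alpha\in A$ is good for every closed $F\subs U$; taking $Y\in\rcp X$ with $Y\subs U$ (the only use of $\pi$-regularity), the set $C$ of $\alpha$ for which $Y\setm X_\alpha$ is $<\kappa$-closed contains $A$ and is easily seen to be closed, hence is cub. No stationarity, no Fodor, and no case split on $\cf(\lambda)$ is needed; your reindexing of the decomposition along $C$ at the end is fine and matches the paper.
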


\begin{proof}
Let us consider first every pair $\<x,S\>$ such that $x \notin S$ but $x\in \leka S$ and assign
to this pair $\<x,S\>$ a set $A(x,S)\in \br S;<\kappa;$ with $x\in \overline {A(x,S)}$.
Moreover, choose a chain decomposition  $\{Z_{\alpha}:{\alpha}<\cf({\lambda})\}\subs \br X;<{{\lambda}};$
of $X$ in an arbitrary manner.
Then we define the sequence $\<X_{\alpha}:{\alpha}<\cf({\lambda})\>$
by transfinite recursion on $\alpha$  as follows:
\begin{enumerate}[(i)]
\item $X_0=\empt$;
\item $X_{\alpha} = \bigcup \{X_{\beta}:{\beta} < \alpha \}$ if $\alpha > 0$ is limit;
\item $X_{{\alpha}+1}= X_{\alpha} \cup Z_\alpha \cup\bigcup\{A(x,X \setm X_\alpha):
x\in X_{\alpha} \cap \leka{X\setm X_{\alpha}}\}.$

\end{enumerate}
Since ${\kappa}\le {\lambda}$ is regular, we can show by an easy transfinite induction that
$|X_{\alpha}|<{\lambda}$ for all ${\alpha}<\cf ({\lambda})$, moreover condition (C1)
obviously follows from case (iii) of our definition. This proves the first half of the lemma.

Now assume that, in addition,  $X$ is not $\cf(\lambda)$-resolvable.
For each $A\subs \cf({\lambda})$ let us set
\begin{equation}\notag
R_A=\bigcup_{{\alpha}\in A}(X_{\alpha+1}\setm X_{\alpha}).
\end{equation}
If $R_A$ would be dense in $X$ for all $A \in [\cf(\lambda)]^{\cf(\lambda)}$
then clearly $X$ would be $\cf({\lambda})$-resolvable,
hence there is a cofinal $A \subs \cf(\lambda)$ and an open set $U \in \tau^+(X)$
with $U \cap R_A=\empt$.

We claim that then for every closed set $F \subs U$ and for every ${\alpha}\in A$
we have that $F\setm X_{\alpha}$ is $<\kappa$-closed.
Indeed, assume on the contrary that
$x\in \leka {F\setm X_{\alpha}}\cap X_{{\alpha}}$.
Then, by (C1), there is a set
$S\in \br X_{\alpha+1}\setm X_{\alpha};<\kappa;$
with $x\in \overline S$. Since $x\in F \subs U$, this implies $U\cap S \ne \empt$,
which contradicts $U \cap R_A=\empt\,,$ as ${\alpha}\in A$ and $S \subs X_{\alpha+1} \setm X_\alpha$.

Now, if $X$ is also $\pi$-regular then there is a regular closed set
$Y\in \rcp X$ with $Y \subs U$.
Let us consider the set
\begin{displaymath}
C=\{{\alpha}<{\cf({\lambda})}: Y\setm X_{\alpha}\text{ is $<{\kappa}$-closed}\}.
\end{displaymath}
$C$ is clearly closed in $\cf(\lambda)$ and $A\subs C$ by the above,
hence $C$ is cub in $\cf(\lambda)$.
This completes the proof of lemma \ref{lm:decomp3}.

\end{proof}

We have one more preparatory result involving chain decompositions that will
be used in the proof of our main theorem.

\begin{lemma}\label{lm:star}
Assume that $Y$ is a $\pi$-regular space that is not ${\omega}$-resolvable. Then
for every chain decomposition $\{Y_{\alpha}:{\alpha}<\mu\}$  of $Y$ there are  $T\in \rcp Y$
and a dense subset $Z \subs T$ such that
\begin{equation}\label{eq:init_closed}
\text{$\overline{Y_{\alpha}\cap Z}\subs  Y_{\alpha}$ \mbox{ for all }
${\alpha}<{\mu}$}.
\end{equation}
\end{lemma}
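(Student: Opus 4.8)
Call $Z\subseteq Y$ \emph{good} if $\overline{Z\cap Y_\alpha}\subseteq Y_\alpha$ for every $\alpha<\mu$; good sets are closed under subsets and finite unions. The plan is to produce a good $Z$ that is dense in some nonempty open $V\subseteq Y$ with $Z\subseteq V$. This suffices: then $T:=\overline V\in\rcp Y$, the set $Z$ is dense in $T$, and $Z\subseteq V$ forces $\overline{Z\cap Y_\alpha}\subseteq\overline V=T$, so (\ref{eq:init_closed}) holds for $T$ and $Z$. Thus the real issue is density, not goodness as such.

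First I would shrink the ambient space. Since $Y$ is not $\omega$-resolvable, Theorem~\ref{tm:illanes} provides a least $n\ge 2$ for which $Y$ is not $n$-resolvable, and Theorem~\ref{tm:el} (with $\kappa=n$) then provides $U\in\tau^+(Y)$ no nonempty subspace of which is $n$-resolvable. From now on I would work only with nonempty open $V\subseteq U$: every such $V$, as a subspace, fails to be $n$-resolvable, and since $Y$ is $\pi$-regular every nonempty open $V\subseteq U$ contains a nonempty open $W$ with $\overline W\subseteq V$ (take $W=\intt R$ for $R\in\rcp Y$ with $R\subseteq V$). Note also that whatever nonempty open $V\subseteq U$ the construction ends with, $\overline V\in\rcp Y$ automatically, as $V$ is open in $Y$.

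Now the construction. For $x\in Y$ let $\rho(x)<\mu$ be the unique ordinal with $x\in Y_{\rho(x)+1}\setminus Y_{\rho(x)}$ (it exists by continuity of the chain decomposition), and for $A\subseteq Y$ put $\Phi(A)=\{x\in A:\ x\in\overline{\{a\in A:\rho(a)<\rho(x)\}}\}$, the set of \emph{bad points} of $A$. The decisive elementary fact is: \emph{if, for the current open set $V$, the bad points $\Phi(V)$ are not dense in $V$, then we are done}. Indeed, pick $R\in\rcp Y$ with $\overline{\intt R}=R\subseteq V\setminus\overline{\Phi(V)}$ and set $Z=\intt R$; if $x\in\overline{Z\cap Y_\alpha}$ with $\rho(x)\ge\alpha$ then $x\in R\subseteq V$ and $x\in\overline{\{y\in V:\rho(y)<\alpha\}}\subseteq\overline{\{y\in V:\rho(y)<\rho(x)\}}$, so $x\in\Phi(V)$, contradicting $R\cap\Phi(V)=\empt$; hence $x\in Y_\alpha$, so $Z$ is good and dense in $R\in\rcp Y$. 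If instead $\Phi(V)$ is dense in $V$, I would peel: put $A^{(0)}=V$ and $A^{(m+1)}=A^{(m)}\setminus\Phi(A^{(m)})$. The layers $\Phi(A^{(m)})=A^{(m)}\setminus A^{(m+1)}$ are pairwise disjoint, and $V$ is not $n$-resolvable, so at most $n-1$ of them are dense in $V$; fix $m\le n-1$ with $\Phi(A^{(m)})$ not dense in $V$ and pass to $V':=V\setminus\overline{\Phi(A^{(m)})}$, on which the peeling has stabilized, i.e.\ $A^{(m)}\cap V'=A^{(m+k)}\cap V'$ for all $k$. Then repeat the step with $V'$ in place of $V$.

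The crux --- and the part I expect to be genuinely delicate --- is to show that this procedure must halt; it can only halt through the ``$\Phi(V)$ not dense'' alternative, which finishes the proof. The obstacle is exactly that the stabilized set $A^{(m)}\cap V'$ need not be dense in $V'$ (the earlier layers $\Phi(A^{(0)}),\dots,\Phi(A^{(m-1)})$ may be dense there), so one is forced to re-enter the procedure on the smaller open set $V'$ rather than reading off a good dense set at once. Termination ought to follow by combining the uniform finite bound $n$ on the number of dense peeling-layers any subspace can carry with a rank argument on the ordinal levels $\rho$ --- isolating some quantity, tied to the $\rho$-levels of the points that govern the accumulation, which strictly decreases along the nested open sets $V\supseteq V'\supseteq\cdots$, so that an infinite run would yield a strictly decreasing $\omega$-sequence of ordinals below $\mu$, which is absurd --- while using the $\pi$-regular closure control at each step so that accumulation points on the boundaries of the shrinking open sets cannot spoil the goodness of the set finally produced. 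Pinning down the exact descending quantity is the heart of the matter, and the point I am least sure of.
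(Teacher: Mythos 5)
There is a genuine gap, and it is exactly where you locate it: your procedure has no termination argument, and I do not think one can be supplied for the peeling as you have set it up. The root of the trouble is the \emph{direction} of the peeling. You remove at each stage the bad points $\Phi(A)$, and the event that would let you stop (``$\Phi$ of the current set is not dense'') gives you no control over the surviving residue, so you are forced to shrink the open set and start over with nothing visibly decreasing. Moreover, the intermediate claim that the peeling stabilizes on $V'=V\setminus\overline{\Phi(A^{(m)})}$ is false as stated: $\Phi(A^{(m+1)})$ is defined via closures that may be witnessed by points of $A^{(m+1)}$ lying outside $V'$, so $A^{(m+1)}\cap V'=A^{(m)}\cap V'$ does not propagate to $A^{(m+2)}\cap V'$.

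The paper's proof peels in the opposite direction, and that reversal makes termination automatic. For $A\subseteq Y$ put $A^*=A\setminus\Phi(A)=\{x\in A:\ x\notin\overline{A\cap Y_{\alpha(x)}}\}$; the decisive elementary observation is that $A^*$ is \emph{dense in} $A$ for every $A$ (pick $a\in U\cap A$ with $\alpha(a)$ minimal). Now set $D_j=\bigl(Y\setminus\bigcup_{i<j}D_i\bigr)^*$, i.e.\ remove the \emph{good} points at each stage. The $D_j$ are pairwise disjoint and $D_0=Y^*$ is dense, so non-$\omega$-resolvability yields an $m$ with $D_m$ dense and $D_{m+1}$ not dense, say $U\cap D_{m+1}=\empt$. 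Since $D_{m+1}$ is dense in the residue $Y\setminus\bigcup_{i\le m}D_i$, this forces $U\subseteq\bigcup_{j\le m}D_j$: the ``not dense'' event kills the entire residue locally, which is precisely what your version lacks. Taking $T\in\rcp Y$ with $T\subseteq U$ and $Z=T\cap D_m$, every $x\in T$ lies in some $D_j$ with $j\le m$, hence $x\notin\overline{D_m\cap Y_{\alpha(x)}}$ (as $D_m$ is contained in the residue of which $x$ is a starred point); combined with $x\in\overline{Z\cap Y_\alpha}\subseteq\overline{D_m\cap Y_\alpha}$ this gives $\alpha(x)<\alpha$, i.e.\ $x\in Y_\alpha$. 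Your reduction to $T=\overline{V}$ and your ``decisive elementary fact'' are both sound; what is missing is this reversal, which turns the non-density of a single layer into a covering statement and dispenses with any rank or descent argument.
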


\begin{proof} By the continuity of chain decompositions,
for each point $x\in Y$ there is a unique ordinal   ${\alpha}(x) < \mu$
such that $$x\in Y_{{\alpha}(x)+1}\setm Y_{{\alpha}(x)}.$$
For any subset $A\subs Y$ let us define
\begin{displaymath}
 A^*=\{x\in A: x \notin \overline{A\cap Y_{{\alpha}(x)}}\}.
\end{displaymath}
We claim that $A^*$ is dense in $A$ for every $A\subs Y$.

Indeed, if  $U$ is open and  $U \cap A \ne \empt$, then
pick  $a\in U \cap A$ such that $\alpha(a)$ is minimal. Then
$a\in A^*$ because by the minimality of $\alpha(a)$ we have $U\cap Y_{{\alpha}(a)}=\empt$.

Let us now define the sets  $\{D_j : j < \omega \}$
by means of the following recursive formula:
\begin{displaymath}
 D_j=\big(Y\setm \bigcup_{i<j}D_i\big)^*.
\end{displaymath}

The pairwise disjoint sets  $\{ D_j : j < \omega\}$ cannot be all dense in $Y$
because $Y$ is not ${\omega}$-resolvable, but $D_0 = Y^* $ is dense.
So there is $m\in {\omega}$ such that
$D_m$ is dense but $D_{m+1}$ is not, hence
$U \cap D_{m+1} = \empt$ for some
$U \in \tau^+(X)$. Now, pick  $T\in \rcp Y$ with $T\subs U$.

Then $U \cap D_{m+1} = U \cap \big(Y\setm \bigcup_{i\le m}D_i\big)^*=\empt$ implies

\begin{equation}\label{eq:tcovered}
 T\subs U \subs \bigcup_{j\le m}D_j\,,
\end{equation}
and clearly $Z=T\cap D_m$ is dense in $T$.

Now it remains to show that
\begin{equation}\label{eq:Dm1}
\text{ $\overline{Z \cap Y_{\alpha}}\subs Y_{\alpha}$
for all ${\alpha}<{\mu}$.}
\end{equation}
To see this, fix $\alpha < \mu$ and consider first any point $x\in T$.
Then $x\in D_j$ for some $j\le m$ by (\ref{eq:tcovered}).
This means that  $x\in (Y\setm \bigcup_{i<j}D_i)^*$, i.e.
$x\notin \overline{(Y\setm \bigcup_{i<j}D_i)\cap Y_{{\alpha}(x)}}$.
But $D_m\subs Y\setm \bigcup_{i<j}D_i$, hence
we have $x \notin \overline{D_m \cap Y_{\alpha(x)}}$.

On the other hand, for every point $x \in \overline{Z \cap Y_\alpha}\, \big(\subs T\big)$ we have
$x \in \overline{D_m \cap Y_\alpha}$ because $Z \subs D_m$. This together
with $x \notin \overline{D_m \cap Y_{\alpha(x)}}$ implies
$\alpha(x) < \alpha$ because the sets $Y_\beta$ are increasing. So,by the definition of $\alpha(x)$ we have
$x \in Y_{\alpha(x)+1} \subs Y_\alpha$, and this means that $\overline{Z \cap Y_{\alpha}}\subs Y_{\alpha}$.

\end{proof}

Our next preliminary results will be used in the proof of theorem \ref{tm:steppingup}, a stepping-up result
concerning resolvability of certain spaces.

\begin{lemma}\label{lm:many_intersection}
Assume that ${\kappa}$ is an infinite cardinal,
$X$ a topological space, and we have a disjoint subfamily $\mc H \subs \mathfrak{R}_\kappa(X)$ such that
for each $U\in \tau^+(X)$ $$|\{H\in \mc H: H\cap U\ne \empt\}|=\kpl.$$
Then $X$ is $\kpl$-resolvable.
\end{lemma}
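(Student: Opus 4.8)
The plan is to construct the $\kpl$ required pairwise disjoint dense subsets directly, by distributing the ``resolution pieces'' of the members of $\mc H$ among them. I would first dispose of the trivial case $X=\empt$, and then observe the key point: when $X\ne\empt$ we have $X\in\tau^+(X)$, so the hypothesis applied to $U=X$ gives $|\{H\in\mc H:H\ne\empt\}|=\kpl$. Discarding the empty members (which do not affect anything), we may thus fix an injective enumeration $\mc H=\{H_i:i<\kpl\}$ with every $H_i$ non-empty. Since each $H_i$ is $\kappa$-resolvable, we also fix pairwise disjoint sets $\{H_i^\xi:\xi<\kappa\}$, each dense in $H_i$; as the $H_i$ themselves are pairwise disjoint, the whole indexed family $\{H_i^\xi:i<\kpl,\ \xi<\kappa\}$ consists of pairwise disjoint sets.

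Next, for each $i<\kpl$ I would fix a surjection $e_i:\kappa\to i+1$ (possible since $i+1$ is a non-empty ordinal of cardinality at most $\kappa$), and set, for each $\eta<\kpl$,
\[
D_\eta=\bigcup\{H_i^\xi:\ i<\kpl,\ \xi<\kappa,\ e_i(\xi)=\eta\}.
\]
Disjointness of $\{D_\eta:\eta<\kpl\}$ is then immediate: a point in $D_\eta\cap D_{\eta'}$ would lie in some $H_i^\xi$ and some $H_j^{\xi'}$; disjointness of $\mc H$ forces $i=j$, disjointness of the $H_i^\xi$ forces $\xi=\xi'$, and then $\eta=e_i(\xi)=e_i(\xi')=\eta'$.

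It remains to show that each $D_\eta$ is dense in $X$, and this is where the counting from the first step does the work. Given any $U\in\tau^+(X)$, the hypothesis says that $\{i<\kpl:H_i\cap U\ne\empt\}$ has cardinality $\kpl$, hence (as $\kpl$ is regular) is cofinal in $\kpl$; so we may choose $i\ge\eta$ with $H_i\cap U\ne\empt$. Since $\eta\le i$ and $e_i$ maps onto $i+1\ni\eta$, there is $\xi<\kappa$ with $e_i(\xi)=\eta$, whence $H_i^\xi\subs D_\eta$; and because $H_i^\xi$ is dense in $H_i$ while $U\cap H_i$ is a non-empty relatively open subset of $H_i$, we get $\empt\ne U\cap H_i^\xi\subs U\cap D_\eta$. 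Thus every $D_\eta$ meets every non-empty open set, so $\{D_\eta:\eta<\kpl\}$ witnesses that $X$ is $\kpl$-resolvable. I do not expect a genuine obstacle here: the only step that is not completely routine is the observation $|\mc H|=\kpl$, which is exactly what makes the ``partition into an initial segment'' device $e_i:\kappa\to i+1$ available — each $H_i$ feeds its $\kappa$ pieces only into those $D_\eta$ with $\eta\le i$, and cofinality of $\{i:H_i\cap U\ne\empt\}$ in $\kpl$ is precisely what guarantees this is enough for density.
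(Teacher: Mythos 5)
Your proof is correct and follows essentially the same route as the paper: enumerate $\mc H$ as $\{H_i:i<\kappa^+\}$, let each $H_i$ distribute dense pieces to the indices $\eta\le i$, and use that $\{i:H_i\cap U\ne\empt\}$ is cofinal in $\kappa^+$ to get density. The paper just phrases the redistribution by partitioning $H_\xi$ directly into $\xi$ many dense subsets, which is exactly what your surjections $e_i:\kappa\to i+1$ accomplish.
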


\begin{proof}[Proof of lemma \ref{lm:many_intersection}]
Obviously, $|\mc H|=\kpl$, so we can fix a one-one
enumeration $\mc H=\{H_{\xi}:{\xi}<\kpl\}$
of $\mc H$. Every $H_{\xi}$ is ${\kappa}$-resolvable, and so has a partition
\begin{equation}
 H_{\xi}=\bigcup{}^*\{H^i_{\xi}:i<{\xi}\}
\end{equation}
into dense subsets.
Then for every $i<\kpl$ the set
\begin{equation}
D^i=\bigcup\{H^i_{\xi}:i<{\xi}<\kpl\}.
\end{equation}
is dense in $X$.
Indeed, for each $U \in \tau^+(X)$
by our assumption there is ${\xi}>i$ with $U\cap H_{\xi}\ne \empt$.
But $H^i_{\xi}$ is dense in $H_{\xi}$,
so we have $U \cap D^i \supset U \cap H^i_{\xi} \ne \empt$ as well.
As the dense sets $\{D^i : i < \kpl \}$ are pairwise disjoint, our proof is complete.

\end{proof}

To formulate the following corollary of lemma \ref{lm:many_intersection}, we need
one more definition.

\begin{definition}\label{df:nice}
Let $X$ be any topological space and $\kappa$ an infinite cardinal. A
(necessarily closed) subset $F \subs X$ is called {\em $\kappa$-nice in $X$}
if there is a disjoint family $\{A_\alpha : \alpha < \kappa^+ \} \subs \mathfrak{R}_\kappa(X)$
such that $$F = \bigcap_{\alpha < \kappa^+} \overline{\bigcup \{A_\beta  : \beta \in \kappa^+ \setm \alpha \}} \,.$$
\end{definition}

Following the terminology of \cite{JSz}, we call a space $\lambda$-compact
if every subset of it of cardinality $\lambda$ has a complete accumulation point.

\begin{corollary}\label{co:nice}
Let $\kappa$ be an infinite cardinal and $X$ be a $\kappa^+$-compact space.
If there is a disjoint family $\mathcal{F}$ of both $\kappa$-resolvable
and $\kappa$-nice subsets of $X$
such that $|\mathcal{F}| \le \kappa^+$ and $\,\bigcup \mathcal{F}$ is dense in $X$,
then $X$ has a $\kappa^+$-resolvable open subset.
\end{corollary}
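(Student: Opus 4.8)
The plan is to reduce the statement to Lemma~\ref{lm:many_intersection}, applied not to $X$ itself but to a suitable nonempty open subspace of $X$. First I would fix, for every $F\in\mathcal F$, a disjoint family $\{A^{F}_{{\alpha}}:{\alpha}<\kpl\}\subs\mathfrak{R}_\kappa(X)$ witnessing that $F$ is $\kappa$-nice, so that
\[
F=\bigcap_{{\alpha}<\kpl}\overline{\bigcup\{A^{F}_{{\beta}}:{\beta}\in\kpl\setm{\alpha}\}},
\]
and consider $\mc G=\{A^{F}_{{\alpha}}:F\in\mathcal F,\ {\alpha}<\kpl\}$. Then $\mc G\subs\mathfrak{R}_\kappa(X)$ and $|\mc G|\le|\mathcal F|\cdot\kpl\le\kpl$. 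The first step is the observation that $\mc G$ is ``$\kpl$-dense'' in $X$: for every $U\in\tau^+(X)$ we have $|\{G\in\mc G:G\cap U\ne\empt\}|=\kpl$. Indeed, since $\bigcup\mathcal F$ is dense there are $F\in\mathcal F$ and $x\in U\cap F$; as $x\in F$, the neighbourhood $U$ of $x$ meets $\bigcup\{A^{F}_{{\beta}}:{\beta}\ge{\alpha}\}$ for every ${\alpha}<\kpl$, hence meets some $A^{F}_{{\beta}}$ with ${\beta}\ge{\alpha}$; since $\kpl$ is regular this forces $U$ to meet $\kpl$ many members of $\mc G$. If $\mc G$ were a \emph{disjoint} family we could finish right here, by Lemma~\ref{lm:many_intersection}, with ``$X$ itself is $\kpl$-resolvable''.

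The point, however, is that $\mc G$ need not be disjoint: for a fixed $F$ the sets $\{A^{F}_{{\alpha}}:{\alpha}<\kpl\}$ are pairwise disjoint, but witnessing sets attached to different members of $\mathcal F$ may overlap. Eliminating these overlaps while retaining $\kpl$-density is the real content of the proof, and it is here that the disjointness of $\mathcal F$ and the $\kpl$-compactness of $X$ enter. The route I would take is: pick a maximal disjoint subfamily $\mc H\subs\mc G$, set
\[
W=\bigcup\{U\in\tau^+(X):|\{H\in\mc H:H\cap U\ne\empt\}|<\kpl\},
\]
and show that $W$ is not dense in $X$. Granting this, $V:=X\setm\overline W$ is a nonempty open subset of $X$, no nonempty open subset of which is ``$\mc H$-thin'', so $\{H\cap V:H\in\mc H,\ H\cap V\ne\empt\}$ is a disjoint subfamily of $\mathfrak{R}_\kappa(V)$ — each $H\cap V$ being a nonempty relatively open, hence $\kappa$-resolvable, subspace of $H$ — that is $\kpl$-dense in $V$; Lemma~\ref{lm:many_intersection} applied to the space $V$ then gives that $V$ is $\kpl$-resolvable, and $V$ is open in $X$.

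The step I expect to be the main obstacle is showing that $W$ is not dense. Assuming it were, the $\kpl$-density of $\mc G$ would give, inside every nonempty open set, a nonempty open $U$ meeting $\kpl$ many members of $\mc G$ but only $<\kpl$ many members of $\mc H$; thus $\kpl$ many of the $\mc G$-sets meeting $U$ lie outside $\mc H$, and by maximality of $\mc H$ each of them meets $\bigcup\mc H$. Since two witnessing sets coming from the same $F$ are disjoint, the $\mc H$-members they hit must come from other members of $\mathcal F$, and one then feeds an appropriately chosen $\kpl$-sized set of such incidence points into the $\kpl$-compactness of $X$: at a complete accumulation point of that set the $\kpl$-density of $\mc G$, the maximality of $\mc H$, and the disjointness of $\mathcal F$ should collide to yield a contradiction. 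Identifying the exact $\kpl$-sized set to use — and checking that the disjointness of $\mathcal F$ really forces the incidences to behave — is the delicate part of the argument.
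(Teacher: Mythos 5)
Your reduction to Lemma \ref{lm:many_intersection} is the right idea, and your opening observations --- that the combined witness family $\mathcal{G}$ meets every nonempty open set $\kappa^+$ many times, and that the only problem is disjointness across different members of $\mathcal{F}$ --- are correct. But the proof is not complete: the step you yourself flag as the main obstacle, namely that $W$ is not dense, is exactly where all the work lies, and the sketch you give for it does not go through as stated. The difficulty is quantitative. To get a contradiction from $\kappa^+$-compactness you need $\kappa^+$ many incidence points $x\in A^F_\alpha\cap A^{F'}_\beta$ attached to a \emph{single} pair $F\ne F'$, with the indices $\alpha,\beta$ cofinal in $\kappa^+$; only then does a complete accumulation point land in $F\cap F'=\emptyset$. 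Since $|\mathcal{F}|$ may equal $\kappa^+$, your $\kappa^+$ many incidences can be spread over $\kappa^+$ many distinct pairs, and the pigeonhole fails. A second, related problem is that a maximal disjoint subfamily $\mathcal{H}\subseteq\mathcal{G}$ may select only ``early'' witnesses $A^F_\alpha$: the density coming from $\kappa$-niceness is a tail property ($U\cap F\ne\emptyset$ forces $U$ to meet cofinally many $A^F_\alpha$), and an arbitrary maximal disjoint subfamily need not contain $\kappa^+$ many of those, so the $\kappa^+$-density of $\mathcal{H}$ off $W$ cannot be recovered from maximality alone.

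The paper resolves both issues with a dichotomy you are missing. If every $U\in\tau^+(X)$ meets $\kappa^+$ many members of $\mathcal{F}$, then Lemma \ref{lm:many_intersection} applies directly to $\mathcal{F}$ itself (it is disjoint and consists of $\kappa$-resolvable sets) and $X$ is $\kappa^+$-resolvable. Otherwise fix $U$ meeting only $\le\kappa$ many members, say those in $\mathcal{F}^*$. Now $\kappa^+$-compactness together with the disjointness of $\mathcal{F}$ gives, for each pair $\{F,G\}\in[\mathcal{F}^*]^2$, an index $\alpha(F,G)<\kappa^+$ beyond which $\bigcup\{A^F_\beta:\beta\ge\alpha\}$ and $\bigcup\{A^G_\beta:\beta\ge\alpha\}$ are disjoint (otherwise $\kappa^+$ many distinct common points would have a complete accumulation point in $F\cap G$). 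Since $|\mathcal{F}^*|\le\kappa$, these indices are bounded by some $\gamma<\kappa^+$, and the \emph{tail} family $\{A^F_\alpha: F\in\mathcal{F}^*,\ \alpha\ge\gamma\}$ is genuinely disjoint while still meeting every $V\in\tau^+(U)$ $\kappa^+$ many times. This uniform tail is the disjointification your maximal-subfamily device was trying to achieve; you need both the localization to $U$ (to make the pigeonhole work) and the tail structure (to keep the density), and neither is supplied by your argument as written.
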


\begin{proof}
If for every $U\in \tau^+(X)$ we have $|\{F\in \mc F: F \cap U \ne \empt\}|=\kpl$
then $X$ itself is $\kappa^+$-resolvable by lemma \ref{lm:many_intersection}.
So assume that $U\in \tau^+(X)$ is such that $\mathfrak{F}^* = \{F \in \mathcal{F} : U \cap F \ne \empt \}$
has cardinality $\le \kappa$.

For each $F \in \mathcal{F}^*$ let us fix
a disjoint family $$\{A^F_\alpha : \alpha < \kappa^+ \} \subs \mathfrak{R}_\kappa(X)$$
witnessing that $F$ is nice, as required in definition \ref{df:nice}.
We claim that for every pair $\{F,G\} \in [\mathcal{F^*}]^2$ there is an $\alpha = \alpha(F,G) < \kappa^+$ such that
$$\bigcup \{A^F_\beta  : \beta \in \kappa^+ \setm \alpha \} \cap \bigcup \{A^G_\beta  : \beta \in \kappa^+ \setm \alpha \} = \empt\,.$$
Indeed, otherwise we could select a set $I \in [\kappa^+]^{\kappa^+}$ and distinct points $\{x_\alpha : \alpha \in I\}$ such that
$$x_\alpha \in \bigcup \{A^F_\beta  : \beta \in \kappa^+ \setm \alpha \} \cap \bigcup \{A^G_\beta  : \beta \in \kappa^+ \setm \alpha\}$$
whenever $\alpha \in I$. But then $\{x_\alpha : \alpha \in I \}^\circ \ne \empt$ would be a subset of
$$
\bigcap_{\alpha \in I} \overline{\bigcup \{A^F_\beta  : \beta \in \kappa^+ \setm \alpha \}} \cap
\bigcap_{\alpha \in I} \overline{\bigcup \{A^G_\beta  : \beta \in \kappa^+ \setm \alpha \}}
= F \cap G\,,$$ contradicting $F \cap G = \empt$.

Now, $|\mathcal{F}^*| \le \kappa$ implies that there is an ordinal $\gamma < \kappa^+$
such that $\alpha(F,G) < \gamma$ for all pairs $\{F,G\} \in [{\mc F}^*]^2$. Consequently,
the elements of the family $$\mc J = \{A^F_\alpha : F \in \mathcal{F}^* \mbox{ and }
\alpha \in \kappa^+ \setm \gamma\} \subs \mathfrak{R}_\kappa(X)$$
are pairwise disjoint and, by our assumptions, both $\bigcup \mathcal{F}^* \cap U$ and 
$\bigcup \mc J \cap U$ are dense in $U$.

Thus, for every $V \in \tau^+(U)$ there is $F\in {\mc F}^*$ for which $V \cap F \ne \empt$.
But this clearly implies that $|\{\alpha \in \kappa^+ \setm \gamma : V \cap A^F_\alpha \ne \empt \}| = \kappa^+$,
hence $U$ and the family $$\mc H = \mc J \upharpoonright U = \{U \cap A^F_\alpha : F \in \mathcal{F}^* \mbox{ and }
\alpha \in \kappa^+ \setm \gamma\}$$ satisfy the assumptions of lemma
\ref{lm:many_intersection}, consequently $U$ is $\kappa^+$-resolvable.

\end{proof}

\newpage

\section{The Main Result}

We are now ready to formulate and prove our main result.

\begin{theorem}\label{tm:gen_e_res}
Let $\kappa$ be an uncountable regular cardinal.
Then every regular space $X$ satisfying $$\Delta(X) \ge \kappa \ge \widehat \ext(X)$$
is $\omega$-resolvable. Consequently, every regular space $X$ that satisfies
$\Delta(X) > \ext(X)$ is $\omega$-resolvable.
\end{theorem}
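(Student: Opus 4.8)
The plan breaks into the easy reduction of the second assertion, the overall induction for the first assertion, the analytic core, and the obstacle.

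\smallskip
\noindent\textbf{From the first statement to the second.} If $X$ is regular and $\Delta(X)>\ext(X)$ then $X$ must be infinite (a finite $T_1$ space is discrete, so there $\ext=|X|\ge\Delta$), hence $\ext(X)\ge\omega$ because in an infinite $T_1$ space every finite set is closed and discrete. Thus $\kappa:=\ext(X)^+$ is an uncountable regular cardinal with $\widehat\ext(X)\le\ext(X)^+=\kappa\le\Delta(X)$, and the first statement applies verbatim.

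\smallskip
\noindent\textbf{Strategy for the first statement.} By Illanes' theorem \ref{tm:illanes} it suffices to prove, by induction on $n<\omega$, that every regular space $X$ with $\Delta(X)\ge\kappa\ge\widehat\ext(X)$ is $n$-resolvable; the case $n\le 1$ is trivial. For the step $n\to n+1$ I would argue by contradiction: if some such $X$ is not $(n+1)$-resolvable, then by Elkin's theorem \ref{tm:el} the $(n+1)$-resolvable subspaces fail to form a $\pi$-network, so there is a non-empty open set missing them; shrinking it and passing to a non-empty regular closed subset (as in the remark preceding corollary \ref{cl:rc}, and using that $\rcp{\cdot}$ preserves both $\Delta\ge\kappa$ and $\widehat\ext\le\kappa$) we may assume $|X|=\Delta(X)=:\lambda\ge\kappa$ while $X$ still has no $(n+1)$-resolvable subspace but is $n$-resolvable. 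Fixing an $n$-resolution into dense pieces, it is then enough to split one piece in two, i.e. to produce a dense subspace that is $2$-resolvable; this is exactly where lemma \ref{lm:2res} enters, and where one must verify its hypothesis: a dense set $Y$ of size $\le\kappa$ each of whose points is a complete accumulation point of some member of the family $\mc D$.

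\smallskip
\noindent\textbf{Producing the members of $\mc D$.} The role of the hypothesis $\kappa\ge\widehat\ext(X)$ is to supply accumulation points: by lemma \ref{lm:small_acc}, sets $A\in[X]^\kappa$ with $|A'|<\kappa$ contain \appr\kappa sets, and, after the usual shrinking to arrange $D'=\accu D$ and $D\cap\accu D=\empt$, these yield members of $\mc D$; theorem \ref{tm:app} and corollary \ref{A'} package the same mechanism and show that a maximal disjoint family of \appr\kappa sets has dense union whenever such sets form a $\pi$-network. When $\lambda$ is regular one can run this directly inside every non-empty open set. When $\lambda$ is singular — the case that is hard in Pavlov's treatment and, I expect, the main obstacle here too — one first notes that $X$, having no $(n+1)$-resolvable subspace, is a fortiori not $\cf(\lambda)$-resolvable, so lemma \ref{lm:decomp3} applies and, after passing to a regular closed $Y$, gives a chain decomposition $\{Y_\alpha:\alpha<\cf(\lambda)\}\subs[Y]^{<\lambda}$ with each $Y\setm Y_\alpha$ being $<\kappa$-closed; lemma \ref{lm:star} then furnishes a further regular closed $T\subs Y$ and a dense $Z\subs T$ with $\overline{Y_\alpha\cap Z}\subs Y_\alpha$ for all $\alpha$. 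The point is that these two ``tail-closed / head-closed'' properties let one carry out the accumulation-point construction level by level along the decomposition, each $Y_\alpha$ having size $<\lambda$, and reassemble a dense set of complete accumulation points of $\mc D$-members inside $T$; lemma \ref{lm:2res} then makes $T$ (hence a dense piece of $X$) $2$-resolvable, contradicting the non-$(n+1)$-resolvability of $X$. The delicate point — and the step I would expect to cost the most work — is precisely guaranteeing, via the interplay of the $<\kappa$-closedness of $Y\setm Y_\alpha$ with $\overline{Y_\alpha\cap Z}\subs Y_\alpha$, that the sets one extracts inside open subsets of $T$ are genuinely of size $\kappa$ and acquire their complete accumulation points within the set being built, so that the resulting family $\mc D$ really does cover a dense subset of $T$. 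Once that is in place, the induction closes and theorem \ref{tm:illanes} upgrades ``$n$-resolvable for all $n$'' to $\omega$-resolvability.
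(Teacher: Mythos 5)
Your reduction of the second statement to the first, the outer reduction to $|X|=\Delta(X)$ via corollary \ref{cl:rc}, the subinduction on $n$ closed off by Illanes' theorem, and the use of lemmas \ref{lm:small_acc}, \ref{lm:decomp3}, \ref{lm:star} and \ref{lm:2res} to split one dense piece of an $n$-resolution in two --- all of this matches the paper's argument in its initial case $|X|=\Delta(X)=\kappa$. (Even there you omit one branch: when the complete accumulation point sets $D^\circ$ escape the chosen dense piece $Z\cap Z_0$, the paper does not invoke lemma \ref{lm:2res} but instead applies theorem \ref{tm:el} to the sets $D^\circ$, which are shown to lie in $\clas{\kappa}$ and hence to be $n$-resolvable; that omission is repairable.)

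The genuine gap is in your treatment of $\lambda:=|X|=\Delta(X)>\kappa$. First, your case split is regular versus singular $\lambda$, and for regular $\lambda>\kappa$ you claim the accumulation-point machinery ``runs directly''; but lemma \ref{lm:small_acc} and corollary \ref{A'} are stated for spaces of cardinality $\kappa$ and produce sets of size $\kappa$, not $\lambda$. Second, and decisively, lemma \ref{lm:2res} requires a dense subset $Y$ of the space being resolved with $|Y|\le\kappa$. In the reduced situation with $\Delta>\kappa$ no such $Y$ exists inside the relevant subspace: the paper's claim \ref{cl:zdense} shows that after the chain-decomposition reductions every subset of $Z$ of size $\le\kappa$ is nowhere dense, so no level-by-level reassembly of complete accumulation points can yield the dense set of size $\le\kappa$ that lemma \ref{lm:2res} needs. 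The paper handles $\lambda>\kappa$ by a mechanism your proposal lacks entirely: an outer induction on $\lambda$ (so that spaces in $\mathcal{C}$ of cardinality $<\lambda$ are already known to be $\omega$-resolvable, not merely $n$-resolvable, which is needed when a regular closed $R\subs Z_\alpha$ with $\Delta(R)\ge\kappa$ turns up), a proof that one may assume $\cf(\lambda)<\kappa$ (via rare sets and $\widehat\ext(X)\le\kappa$), and then a splitting of the dense set $Z=\bigcup_\alpha Z_\alpha$ into two disjoint dense pieces $E$ and $F$, where $E$ is a union of open pieces of the $Z_\alpha$ of size $<\kappa$ and $F$ satisfies $\widehat\ext(F)\le\kappa<\Delta(F)$, so that the inductive hypotheses make $F$ $n$-resolvable while $E$ supplies the $(n+1)$-st dense set. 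Without the double induction and this $E$/$F$ construction the step from $n$ to $n+1$ does not close when $\Delta(X)>\kappa$.
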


\begin{proof}
For convenience, after fixing $\kappa$, we denote by $\mathcal{C}$ the class of
all regular spaces $X$ that satisfy $\Delta(X) \ge \kappa \ge \widehat \ext(X)$.
Clearly, the class $\mathcal{C}$ is regular closed hereditary, that is for every
$X \in \mathcal{C}$ we have $\rcp X \subs \mathcal{C}$. By corollary \ref{cl:rc},
to prove that all members of $\mathcal{C}$ are $\omega$-resolvable it suffices to
show that every $X \in \mathcal{C}$ with $|X| = \Delta(X)$ is $\omega$-resolvable.

To achieve this, for any cardinal ${\lambda}\ge {\kappa}$ we set
$$\clas \lambda = \{X \in \mathcal{C} : |X| = \Delta(X) = \lambda\}\,,$$
and then we prove by induction on $\lambda \ge \kappa$ that

\smallskip

\begin{enumerate}[($*_{\lambda}$)]
 \item every member of $ \clas {\lambda}$ is ${\omega}$-resolvable.
\end{enumerate}

\smallskip

So let us assume now that $\lambda \ge \kappa$ and $(*_\mu)$ holds whenever
$\kappa \le \mu < \lambda$. Clearly, this implies that every space $X \in \mathcal{C}$
with $|X| < \lambda$ is $\omega$-resolvable.

To deduce $(*_\lambda)$ from this,
by theorem \ref{tm:illanes}, it suffices to show that
every member of $\clas {\lambda}$ is $n$-resolvable for all $n\in {\omega} \setm \{0\}$.
This, in turn, will be proved by a subinduction on $n\in {\omega} \setm \{0\}$.
Therefore we assume from here on that for some $n > 0$ we have
\begin{enumerate}[($\circ_n$)]
\item  every member of $ \clas {\lambda}$ is $n$-resolvable
\end{enumerate}
and we want to deduce ($\circ_{n+1}$) from this.
(Of course, ($\circ_1$) holds trivially.)

To prove ($\circ_{n+1}$), we observe first that the class
$\clas {\lambda}$ is also regular closed hereditary,
hence by corollary \ref{cl:rc} again, ($\circ_{n+1}$)
is implied by the following seemingly weaker statement:
\begin{enumerate}[($\circ'_{n+1}$)]
\item  every member of $\clas {\lambda}$ has an $(n+1)$-resolvable subspace.
\end{enumerate}

Now, the proof of ($\circ'_{n+1}$) branches into two: Namely, the initial case
$\lambda = \kappa$ and the case $\lambda > \kappa$ of the induction on $\lambda$ 
are handled differently.

\medskip\noindent{\bf Case 1. }{ ${\lambda}={\kappa}$}
\medskip

Consider any $X \in \clas \kappa$ and recall that our aim is to show that $X$ has an $(n+1)$-resolvable subspace.
If $X$ is ${\kappa}$-resolvable then we are done. Otherwise
by lemma \ref{lm:small_acc} there is $Q \in \rcp X$ such that
\begin{equation}\label{eq:A1}
\text{ $|A'|={\kappa}$ for all $A\in \br Q;{\kappa};$.}
\end{equation}

It easily follows from (\ref{eq:A1}) that for every $Y\in \br Q;<{\kappa};$ and for every
$B\in \rcp Q (\subs \rcp X)$ we have $B \setm Y \in \clas \kappa$. Consequently, if $Y \cap B$
is dense in $B$ then $B$ is $(n+1)$-resolvable because $B \setm Y$ is $n$-resolvable by ($\circ_n$).
So from here on we can assume that
\begin{equation}\label{eq:small_nowhere}
\text{every  set in $\br Q;<{\kappa};$ is nowhere dense.}
\end{equation}

Let us now apply lemma \ref{lm:decomp3} to the space $Q$ and with the choice $\kappa = \lambda$.
This yields us some $Y\in \rcp Q$ and a chain decomposition
$\{Y_{\alpha}:{\alpha}<{{\kappa}}\}\subs \br Y;<{{\kappa}};$ of $Y$ such that
\begin{equation}\label{C22}
\text{$Y\setm Y_{\alpha}$ is $<\kappa$-closed
for each ${\alpha}<{\kappa}$.}
\end{equation}


If $Y$ happens to be $\omega$-resolvable (or just $(n+1)$-resolvable) then, of course, we are done.
Otherwise we may apply lemma \ref{lm:star} to the chain decomposition
$\{Y_{\alpha}:{\alpha}<{{\kappa}}\}$ of $Y$
to obtain  $T\in \rcp Y$
with a  dense subset $Z \subs T$ such that
\begin{equation}\label{eq:init_closed2}
\text{$\overline{Y_{\alpha}\cap Z}\subs  Y_{\alpha}$ for all
${\alpha}<{\kappa}$}.
\end{equation}

Write $R_{\alpha}=Y_{\alpha+1}\setm Y_{\alpha}$ for ${\alpha}<{\kappa}$.
For each $x\in Y$  we let  ${\alpha}(x)\in {{\kappa}}$
be the unique ordinal with $x\in R_{{\alpha}(x)}$.
We call a subset $E\subs Y$  {\em rare} iff $|E\cap R_{\alpha}|\le 1$
for all ${\alpha}<{{\kappa}}$. It is immediate from (\ref{C22}) and (\ref{eq:init_closed2})
that every rare subset $E$ of $Z$ of size $< \kappa$ is closed-and-discrete, i.e. satisfies
$E' = \empt$.

Let us now consider the family
\begin{equation}\notag
\mc D=\{D\in \br Z;{\kappa}; :  D \text{ is discrete and } \,\forall \,E \in [D]^{< \kappa}\, (E' = \empt)  \}.
\end{equation}
The derived set $E'$ of $E$ above is always meant to be taken in $T$ (or equivalently in $X$), not in $Z$.
It is obvious from the definition that for every $D \in \mc D$ we have $D' = D^\circ$ and
$[D]^\kappa \subs \mc D$.

\begin{claim}
\label{D-kör}
For every $D \in \mc D$ we have $\Delta(D^\circ) = \kappa$, consequently $D^\circ \in \clas \kappa$.
\end{claim}

\begin{proof}[Proof of the claim]
Assume that $G$ is any open set with $G \cap D^\circ \ne \empt$ and pick
a point $x \in G \cap D^\circ$. By the regularity of the space $X$ there is an open set $H$ such that
$x \in H \subs \overline{H} \subs G$. Then we have $|H \cap D| = \kappa$, as $x \in D^\circ$, and hence
$|(H \cap D)^\circ| = \kappa$ by (\ref{eq:A1}). But we clearly have
$$(H \cap D)^\circ \subs \overline{H} \cap D^\circ \subs G \cap D^\circ,$$
hence $|G \cap D^\circ| = \kappa$ and therefore $\Delta(D^\circ) = \kappa$.
Since $D^\circ$ is closed in $X$ it is obvious that $\widehat{\ext}(D^\circ) \le \widehat{\ext}(X) \le \kappa$
and hence $D^\circ \in \clas \kappa$.
\end{proof}

We note that this proof used the full force of the regularity of our space.

\begin{claim}
\label{eq:pi-net}
Assume that $V\in \rcp T$ and the set $S\subs V\cap Z$ is dense in $V$.
If $S$ is not ${\kappa}$-resolvable then there is some $D \in \mc D$ 
such that $D \subs S$.
\end{claim}

\begin{proof}[Proof of the claim]
Since every member of $\tau^+(S)$ is somewhere dense in $V$ and hence in $Q$,
it follows from (\ref{eq:small_nowhere}) that $\Delta(S) = \kappa = \cf(\kappa) > \omega$.
If $S$ is not ${\kappa}$-resolvable then  \cite[Theorem 2.2]{JSSz_res2} implies that $S$ must have a
(relatively) discrete subset $J$ of size $\kappa$. Clearly, there is $D \in [J]^\kappa$ that is rare.
But then $D\in \mc D$ because, as was pointed out above, we have $E' = \empt$ for all rare sets
$E$ of size $< \kappa$.
\end{proof}

By the (sub)inductive assumption ($\circ_n$) we have a partition
$$T=\bigcup_{i<n}Z_i$$ of $T$ into $n$ pairwise disjoint dense subsets $Z_i$.
Since $Z \subs T$ is also dense, it is not possible that $Z \cap Z_i$
is nowhere dense for all $i < n$. Thus we can assume,
without loss of generality,  that $Z\cap Z_0$ is somewhere dense, say it is dense in
$V\in \rcp T$.

If there is some $W \in \rcp V$ for which $W \cap Z\cap Z_0$ is $\kappa$-resolvable
(or just ($n+1$)-resolvable) then again we are done. Otherwise, by claim \ref{eq:pi-net},
for each $W \in \rcp V$ the set $W \cap Z\cap Z_0$ includes a member of $\mathcal{D}$ ,
hence we may assume that
\begin{equation}\label{eq:pinet}
\text{$\mc E = \{D \in \mc D : D \subs V \cap Z \cap Z_0\}$ is a $\pi$-network in $V$}.
\end{equation}

Now we distinguish two subcases.

\noindent{ \bf Subcase 1.} $$\mc E_0 = \{D\in \mc E :
\accu D\cap Z\cap Z_0 =\empt \}$$
is a $\pi$-network in $V$.

\smallskip

In this case the family $\mc F = \{D^\circ : D \in \mc E_0 \}$ is also
a $\pi$-network in $V$ because $V$ is $\pi$-regular,
hence by definition $\mc F$ is a $\pi$-network in $V\setm (Z\cap Z_0)$ as well.
But every $D^\circ \in \mc F$ is $n$-resolvable by claim \ref{D-kör} and $(\circ_n)$,
hence so is $V \setm (Z \cap Z_0)$ by theorem \ref{tm:el}. This, however, implies that $V$ is
$(n+1)$-resolvable because $V \cap Z \cap Z_0$ is also dense in $V$.

\medskip

\noindent{ \bf Subcase 2.} $\mc E_0$ is not a $\pi$-network in $V$, i.e. there is  $U\in \rcp V$
such that if $D \in \mc D$ and $D \subs U \cap Z \cap Z_0$ then $D^\circ \cap U \cap Z \cap Z_0 \ne \empt$.
Now $$\mc E_1 =\{D \in \mc D : D \subs U \cap Z \cap Z_0 \} = \{D \in \mc E : D \subs U\}$$
is a $\pi$-network in $U \cap Z \cap Z_0$. From this it follows that the set
$$S = (U \cap Z \cap Z_0) \cap \bigcup \{D^\circ : D \in \mc E_1\}$$ is dense in $U \cap Z \cap Z_0$.

Now, it is easy to check then that the space $U \cap Z \cap Z_0$, the cardinal $\kappa$, the family $\mc E_1$,
and the dense subset $S$ of $(U \cap Z \cap Z_0)$ satisfy all the assumptions of  lemma \ref{lm:2res},
hence $U \cap Z \cap Z_0$, and thus $U \cap Z_0$ as well, is 2-resolvable.
But $U\setm Z_0$ is clearly $n-1$-resolvable, and so it follows that
$U$ is $(n+1)$-resolvable. This completes the proof of ($\circ'_{n+1}$) in
the case ${\lambda}={\kappa}$.

\medskip

\noindent{\bf Case 2.}  ${\lambda}>{\kappa}$.

\medskip

Recall that our aim is to show that every space $X \in \clas \lambda$
has an $(n+1)$-resolvable subspace. Assume first that there are $B \in \rcp X$
and a dense subset $A$ of $B$ with $|A|<\lambda$ such that $B \setm A$ is $\kappa$-closed.
Then we have $\Delta(B \setm A) = \lambda$ because $|A| < \lambda$ and
$\widehat \ext(B\setm A)\le {\kappa}$ because $B\setm A$ is ${\kappa}$-closed,
consequently  $B\setm A \in \clas{\lambda}$. So the (sub)inductive assumption ($\circ_n$)
implies that $B \setm A$ is $n$-resolvable and hence $B$ is  $(n+1)$-resolvable.

Thus we may assume from here on that
\begin{equation}\label{eq:small_nowhere2}
\text{ if $A \in [X]^{< \lambda}$ and
$X\setm A$ is ${\kappa}$-closed then $A$ is nowhere dense.}
\end{equation}

Let us now apply lemma \ref{lm:decomp3} to the space $X$ and the cardinals $\lambda$
and $\kappa^+$. This is possible because $\lambda \ge \kappa^+$.
This way we obtain
$Y\in\rcp X$ with a chain decomposition
$$\{Y_{\alpha}:{\alpha}<{\cf({\lambda})}\}\subs \br Y;<{{\lambda}};$$
of length $\cf(\lambda)$ such that for each $\alpha < \cf(\lambda)$ we have
\begin{equation}\label{C222}
\text{$Y\setm Y_{\alpha}$ is $\kappa$-closed
for each ${\alpha}<{\lambda}$.}
\end{equation}

%
%
Note that then each $Y_{\alpha}$ is nowhere dense by (\ref{eq:small_nowhere2}).

For any point $x \in Y$ we again define the ordinal   ${\alpha}(x) < {\cf({\lambda})}$
by the formula $x\in Y_{{\alpha}(x)+1}\setm Y_{{\alpha}(x)}$ and call a set
$E\subs Y$ is {\em rare} iff
$|E\cap (Y_{{\alpha}+1}\setm Y_{{\alpha}})|\le 1$
for all ${\alpha}<{\cf({\lambda})}$.

If $Y$ is $\omega$-resolvable then we are done. Otherwise we may
apply lemma \ref{lm:star} to obtain
$T\in \rcp Y$ with a dense subset $D \subs T$ such that
\begin{equation}\label{eq:init_closed3}
\text{$\overline{Y_{\alpha}\cap D}\subs  Y_{\alpha}$ for all
${\alpha}<\cf({\lambda})$}.
\end{equation}
 
We claim that $D$ has no rare subset of cardinality $\kappa$. 
This is because for any rare set $E \in [D]^\kappa$ we would have had $E' = \empt$,
contradicting $\widehat{\ext}(X) \le \kappa$. 

To see this, pick any point $x \in Y$. Then
$x \notin \overline{D \cap Y_{\alpha(x)}} \subs Y_{\alpha(x)}$
by \ref{eq:init_closed3}, moreover $x \notin \overline{E \setm Y_{\alpha(x)+1}}$
because $Y \setm Y_{\alpha(x)+1}$ is $\kappa$-closed by (\ref{C222}). 
But $|E \cap (Y_{\alpha(x)+1} \setm Y_{\alpha(x)})| \le 1$, hence we clearly have $x \notin E'$.

Consequently, if we got this far, i.e. no $(n+1)$-resolvable subspace has been found yet,
then we must have $\cf({\lambda}) < {\kappa}$. Indeed, since each $Y_\alpha$
is nowhere dense but $D$ is not, there are cofinally many $\alpha < \cf(\lambda)$
with $D \cap (Y_{\alpha+1} \setm Y_\alpha) \ne \empt$. But then $\cf({\lambda}) \ge {\kappa}$
would clearly imply the existence of a rare subset of $D$ of size ${\kappa}$.

\smallskip

Let us now put $T_{\alpha}= T \cap Y_{\alpha}$ and
$Z_{\alpha}=\overline{(D\cap T_{\alpha+1})}\setm T_\alpha$
for ${\alpha}<\cf({\lambda})$.
Then $Z_{\alpha}\subs T_{\alpha+1}\setm T_{\alpha}$ by (\ref{eq:init_closed3}) and $Z_\alpha$ is $\kappa$-closed,
being the intersection of a closed and a $\kappa$-closed set.
This clearly implies $\widehat{\ext}(Z_\alpha) \le \kappa$.

Moreover, the set
\begin{equation}\notag
 \text{$Z=\bigcup_{{\alpha}<\cf({\lambda})}Z_{\alpha}$ } \subs T
\end{equation}
is dense in $T$ because $D \subs Z$. We also have $\widehat{\ext}(Z) \le \kappa$
because $\widehat{\ext}(Z_\alpha) \le \kappa$ for each $\alpha < \cf(\lambda)$ and $\cf(\lambda) < \kappa = \cf(\kappa)$.

The following observation will be crucial in the rest of our proof.

\begin{claim}\label{cl:zdense}
Every  set $H\in \br Z;\le{\kappa};$ is nowhere dense.
\end{claim}
\begin{proof}[Proof of the claim]
Let us fix $H\in \br Z;\le{\kappa};$ and pick $\alpha < \cf(\lambda)$. Then
we have $\overline{H \cap T_{\alpha}}\subs \overline{D \cap T_\alpha} \subs T_{\alpha}$
by (\ref{eq:init_closed3})
and $\overline{H\setm T_{\alpha+1}}\subs T\setm T_{{\alpha}+1}$ by (\ref{C222}).
Moreover, we also have $$\overline{H \cap (T_{\alpha+1} \setm T_\alpha)} \subs Z_\alpha \subs T_{\alpha+1} \setm T_\alpha$$
because $Z_\alpha \subs T_{\alpha+1} \setm T_\alpha$ is $\kappa$-closed, hence
$$\overline{H}\cap (T_{\alpha+1}\setm T_{\alpha}) = \overline{H \cap (T_{\alpha+1} \setm T_\alpha)} .$$

This then implies that
$\{\overline{H}\cap (T_{\alpha+1}\setm T_{\alpha}):{\alpha}<\cf {\lambda}\}$
is a partition of $\overline{H}$ into relatively clopen subsets of size $< \lambda$.
Consequently, for all $U \in \tau^+(\overline{H})$ we have $ \Delta(U) < \lambda$,
while for every $W \in \tau^+(X)$ we have $\Delta(W) = \lambda$. But this implies that
$\intt\overline{H}=\empt$, i.e. $H$ (or, equivalently $\overline{H}$) is nowhere dense.

\end{proof}

If there are an $\alpha < \cf(\lambda)$ and an $R \in \rcp {Z_\alpha}$ with $\Delta(R) \ge \kappa
( \ge \widehat{\ext}(R))$ then, as $R \in \mathcal{C}$ and
$|R| < \lambda$, our inductive hypothesis implies that  $R$ is
$\omega$-resolvable, hence we are done. 

Consequently, we may assume that
$$\mathcal{P}_\alpha = \{U \in \tau^+(Z_\alpha): |U| < \kappa \}$$
is a $\pi$-base of $Z_\alpha$ for each $\alpha < \cf(\lambda)$.
For any ${\alpha}< \cf({\lambda})$ let
$\mc E_{{\alpha}}$
be a maximal disjoint subfamily of ${\mc P}_\alpha$. It follows then that
$E_{{\alpha}}=\bigcup {\mc E_{{\alpha}}}$ is a dense open subset of $Z_\alpha$,
consequently
\begin{equation}\notag
E=\bigcup_{{\alpha}<\cf({\lambda})}E_{\alpha}
\end{equation}
is dense in $Z$ and hence in $T$.

Let us now put
$F_{\alpha}=Z_{\alpha}\setm E_{\alpha}$ for all ${\alpha}<\cf({\lambda})$ and
\begin{equation}\notag
\text{$F=\bigcup_{{\alpha}<\cf({\lambda})}F_{\alpha}$.}
\end{equation}
Since $F_{\alpha}$ is closed in $Z_{\alpha}$ we have
$\widehat \ext(F_{\alpha})\le \widehat \ext(Z_{\alpha})\le {\kappa}$ and so,
by $\cf(\lambda) < \kappa$,
\begin{equation}\label{eq:fe}
\text{$\widehat \ext(F)\le  {\kappa}$}
\end{equation}
as well.

We claim that $F$ is also dense in $Z$.
Assume on the contrary that  $F\cap V=\empt$ for some $V\in \rcp{Z}$, i.e. $V\subs E$. Then
$V \cap Z_{\alpha}\subs E_{\alpha}$ for each $\alpha < \cf(\lambda)$, hence
${\mc U}_\alpha = \{U \cap (V\cap Z_{\alpha}) : U \in {\mc E_\alpha}\}$
yields a partition of  $V \cap Z_{\alpha}$ into (relatively) clopen subsets of $V \cap Z_\alpha$.
But $V \cap Z_\alpha$ is closed in $Z_\alpha$, consequently,
$\widehat \ext(Z_{\alpha}) \le  {\kappa}$ implies
$|\mc U_{{\alpha}}|<{\kappa}$. But then we also have
$|V\cap Z_{\alpha}|<{\kappa}$ because $|U| < \kappa$ for each $U \in \mc U_\alpha$ and ${\kappa}$ is
a regular cardinal. This, in turn, implies
$|V\cap Z|<{\kappa}$ because $\cf {\lambda}<{\kappa}$. But $V\cap Z$ is somewhere dense and this
contradicts claim \ref{cl:zdense}.
So $F$ is indeed dense in $Z$.

As $F$ is dense in $Z$, applying claim \ref{cl:zdense} again we conclude that
\begin{equation}\label{eq:fd}
\text{$\lambda \ge \Delta(F) > {\kappa}$.}
\end{equation}
Putting (\ref{eq:fe})
and (\ref{eq:fd}) together, our inductive hypotheses, including
$(\circ_n)$, imply that $F$ is $n$-resolvable, hence
$Z$ is $(n+1)$-resolvable because $E\cap F=\empt$.
Thus $(\circ'_{n+1})$ is verified and the proof is completed.
\end{proof}

Let us now make a few comments on the assumptions of our main theorem
\ref{tm:gen_e_res}. Although the uncountability of $\kappa$ was used
in our proof when we referred to theorem 2.2 in \cite{JSSz_res2}, theorem
\ref{tm:gen_e_res} is valid for $\kappa = \omega$ as well. Indeed, to
see this we note that $\widehat \ext (X) = \omega$ just means that $X$ is
countably compact, and Pytkeev proved in \cite{Py} that crowded countably
compact regular spaces are even $\omega_1$-resolvable.

The question if the assumption on the regularity of $\kappa$ is essential is
more interesting and we do not know the answer to it. We only have the
following partial positive result in the case when $\kappa$ is a singular
cardinal of countable cofinality.

\begin{theorem}\label{tm:eomegacof}
Let $\kappa$ be a singular cardinal of countable cofinality.
Then every regular space $X$ satisfying $$\Delta(X) \ge \kappa \ge \widehat \ext(X)$$
is 2-resolvable.
\end{theorem}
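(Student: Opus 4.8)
The plan is to mimic the structure of the proof of Theorem \ref{tm:gen_e_res}, but to exploit the countable cofinality of $\kappa$ in place of the regularity assumption. First I would reduce, via Corollary \ref{cl:rc} (the class of regular spaces $X$ with $\Delta(X)\ge\kappa\ge\widehat\ext(X)$ is regular closed hereditary), to the case $|X|=\Delta(X)=\lambda$ for some $\lambda\ge\kappa$, and then argue by induction on $\lambda$. Fix an increasing sequence of regular cardinals $\langle\kappa_n:n<\omega\rangle$ with supremum $\kappa$ and $\kappa_0>\omega$. Note that $\widehat\ext(X)\le\kappa$ means every $A\in[X]^{\kappa_n}$ has $A'\ne\empt$ for each $n$ with $\kappa_n\ge\widehat\ext(X)$; after discarding finitely many terms we may assume $\widehat\ext(X)\le\kappa_n$ for all $n$.

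For the base case $\lambda=\kappa$ (singular), I would try to produce a regular closed $Q\in\rcp X$ and some $n$ such that $\Delta(Q)=\lambda=\kappa$ while $|A'|=\kappa_n$ fails to be small, i.e. arrange (as in \eqref{eq:A1}) that $|A'|\ge\kappa_n$ for all $A\in[Q]^{\kappa_n}$; if instead some $A\in[Q]^{\kappa_n}$ has $|A'|<\kappa_n$, then by Lemma \ref{lm:small_acc}(1) (applicable since $\kappa_n$ is regular and $\widehat\ext(Q)\le\kappa_n$) we get $\accu A\ne\empt$, and Lemma \ref{lm:small_acc}(2) together with Theorem \ref{tm:app}, or rather Corollary \ref{A'} applied to a suitable regular closed subspace of size $\kappa_n$, yields a $\kappa_n$-resolvable — in particular $2$-resolvable — subspace, and then Corollary \ref{cl:rc} finishes. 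So we may assume that for each $n$, small-derived-set sets of size $\kappa_n$ are confined, and the remaining task is exactly the kind of situation Lemma \ref{lm:2res} is designed for: one produces, for densely many points $y$, a set $D$ in the family $\mathcal D$ of Lemma \ref{lm:2res} (at level $\kappa_n$ for an appropriate $n$) with $y\in\accu D$, coming from a discrete subset of size $\kappa_n$ furnished by \cite[Theorem 2.2]{JSSz_res2} applied inside a non-$\kappa_n$-resolvable regular closed piece (here we crucially use $\kappa_n>\omega$). Since Lemma \ref{lm:2res} only requires $\kappa$ in it to be \emph{regular}, applying it with $\kappa_n$ gives $2$-resolvability of that piece, hence of $X$ by Corollary \ref{cl:rc}.

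For the inductive step $\lambda>\kappa$, I would again invoke Lemma \ref{lm:decomp3}, now with the cardinal pair $\lambda$ and $\kappa_n^+$ (legitimate since $\lambda\ge\kappa>\kappa_n^+$ for large $n$): either some regular closed $B\in\rcp X$ has a dense subset $A$ with $|A|<\lambda$ and $X\setm A$ being $\kappa_n$-closed — whence $B\setm A$ lies in the class with dispersion character $\lambda$ and $\widehat\ext\le\kappa$, so the inductive hypothesis makes $B\setm A$ (in fact $B$) $2$-resolvable — or else we get $Y\in\rcp X$ and a chain decomposition as in \eqref{C222}. Then exactly as in Case 2 of Theorem \ref{tm:gen_e_res}, Lemma \ref{lm:star} (valid for any $\pi$-regular non-$\omega$-resolvable space) produces $T\in\rcp Y$ with a dense $\kappa$-closed-in-pieces subset $Z$, and the dichotomy on $\cf(\lambda)$: if $\cf(\lambda)\ge\kappa_n$ for all $n$, a rare subset of size $\kappa_n$ appears, contradicting $\widehat\ext(X)\le\kappa_n$; so $\cf(\lambda)<\kappa_n$ for some $n$, and the remaining argument splitting $Z$ into $E$ and $F$ (with $\widehat\ext(F)\le\kappa$ because $\cf(\lambda)<\kappa_n=\cf(\kappa_n)$) lets the inductive hypothesis apply to $F$ and yields $2$-resolvability of a subspace of $X$.

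The main obstacle I anticipate is that $2$-resolvability, unlike $\omega$-resolvability, does not have a convenient "stepping-up" engine like Theorem \ref{tm:illanes}, so one cannot run the subinduction on $n$ used in Theorem \ref{tm:gen_e_res}; the whole weight of the argument must be carried directly by Lemma \ref{lm:2res}, and the delicate point is verifying that its hypotheses (a dense set of points $y$ each lying in $\accu D$ for some $D\in\mathcal D$, where $\mathcal D$ is built at the \emph{regular} level $\kappa_n$) really are met uniformly — in particular, that the non-$\kappa_n$-resolvability we are allowed to assume, combined with \cite[Theorem 2.2]{JSSz_res2}, gives discrete sets of size $\kappa_n$ whose complete accumulation points cover a dense subset, and that moving between the different regular approximants $\kappa_n$ does not break the density. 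Handling the case distinctions so that one \emph{fixed} $n$ works throughout the relevant regular closed piece is where the countable-cofinality hypothesis is really spent.
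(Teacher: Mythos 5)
There is a genuine gap, and it sits exactly where the theorem has new content. Your setup says: ``after discarding finitely many terms we may assume $\widehat\ext(X)\le\kappa_n$ for all $n$.'' That is only possible when $\widehat\ext(X)<\kappa$; but in that case, since $\kappa$ is a limit cardinal, there is already a regular uncountable $\mu$ with $\widehat\ext(X)\le\mu\le\kappa\le\Delta(X)$ and Theorem \ref{tm:gen_e_res} gives $\omega$-resolvability outright. The only case Theorem \ref{tm:eomegacof} adds is $\widehat\ext(X)=\kappa$ (indeed $\widehat\ext(R)=\kappa$ for every $R\in\rcp X$, since otherwise \ref{tm:gen_e_res} again applies to $R$). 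In that case the statement ``every $A\in[X]^{\kappa_n}$ has $A'\ne\empt$'' is simply false in general, so none of the $\kappa_n$-level machinery you invoke is available: Lemma \ref{lm:small_acc} and Corollary \ref{A'} need $\widehat\ext\le\kappa_n$; Lemma \ref{lm:2res} needs nonempty complete accumulation sets for $\kappa_n$-sized sets; and \cite[Theorem 2.2]{JSSz_res2} is used in the paper for a piece $S$ with $\Delta(S)=\kappa_n$ regular, whereas after the standard reduction every set of size $<\kappa$ is nowhere dense, so no regular closed piece of size and dispersion character $\kappa_n$ exists. So the whole plan addresses only the case already covered by Theorem \ref{tm:gen_e_res} and silently assumes away the hard one.

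The paper's actual argument for the hard case is short and entirely different: after reducing to $|X|=\Delta(X)=\widehat\ext(X)=\kappa$, to $\widehat\ext(R)=\kappa$ for all $R\in\rcp X$, and to every $Y\in[X]^{<\kappa}$ being nowhere dense, it derives a contradiction. Writing $X=\bigcup_n Y_n$ with $|Y_n|=\kappa_n$, each $\overline{Y_n}$ is nowhere dense, so one builds a decreasing sequence $U_n\in\tau^+(X)$ with $\overline{U_{n+1}}\subsetneqq U_n$ and $\bigcap_n U_n=\empt$; the sets $R_n=\overline{U_n}\setm\intt(\overline{U_{n+1}})$ are regular closed and $\{R_{2n}:n<\omega\}$ is a discrete family. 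Since $\widehat\ext(R_{2n})=\kappa>\kappa_n$, each $R_{2n}$ contains a closed discrete $D_n$ of size $\kappa_n$, and $\bigcup_n D_n$ is then a closed discrete subset of $X$ of size $\kappa$, contradicting $\widehat\ext(X)\le\kappa$. That discreteness-gluing idea is the missing ingredient in your proposal; without it (or some substitute showing the configuration $\widehat\ext(R)\equiv\kappa$ is impossible), the proof does not close.
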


\begin{proof}
Using theorem \ref{tm:gen_e_res} and theorem \ref{tm:el} it clearly suffices to
show that any regular space $X$ with $$|X| = \Delta(X) = \widehat \ext(X) = \kappa$$
has a 2-resolvable subspace.

If there is $R \in \rcp X$ with $\ext(R) < \kappa$ then $R$ is $\omega$-resolvable 
by theorem \ref{tm:gen_e_res}, 
hence we may assume that $\widehat \ext(R) = \kappa$ for
all $R \in \rcp X$. Also, if some $G \in \tau^+(X)$ has a dense subset $Y$ of cardinality
$< \kappa$ then $\Delta(G) = \Delta(X) = \kappa$ implies that $G \setm Y$ is also
dense in $G$, hence $G$ is 2-resolvable. Thus we may also assume that every set $Y \in [X]^{<\kappa}$
is nowhere dense in $X$.

By $\cf(\kappa) = \omega$ we can fix a strictly increasing
sequence of cardinals $\<\kappa_n : n < \omega \>$ with $\kappa = \sum \{\kappa_n : n < \omega \}$.
Then we may choose a sequence of sets $\{Y_n:n<{\omega}\}\subs \br X;<{\kappa};$ with $|Y_n| = \kappa_n$ and
$\bigcup_{n\in {\omega}}Y_n=X$. Each $\overline{Y_n}$ is nowhere dense,
hence we may define by a straightforward induction a sequence
$\{U_n : n < \omega\} \subs \tau^+(X)$ such that $\overline{U_{n+1}} \subsetneqq U_n$
for all $n < \omega$, moreover $\bigcap\{U_n : n < \omega \} = \empt$.

Then $R_n = \overline{U_n} \setm \intt(\overline{U_{n+1}}) \in \rcp X$ and
$\{R_{2n} : n < \omega \}$ is clearly a discrete collection in $X$. But
$\widehat \ext(R_{2n}) = \kappa$ implies the existence of a set
$D_n \in [R_{2n}]^{\kappa_n}$ that is closed discrete in $R_{2n}$ and
hence in $X$. Consequently, $$D = \bigcup \{D_n : n < \omega \} \in [X]^\kappa$$ is also
closed discrete in $X$, contradicting $\widehat \ext(X) = \kappa$ and
completing the proof.

\end{proof}

\section{Stepping-up resolvability}

Let $\kappa$ be an infinite cardinal and denote by $\mc L_\kappa$ the class of all
regular spaces $X$ that are $\kpl$-compact and satisfy $|X| = \Delta(X) = \kpl$.
(We recall that the $\kpl$-compactness of $X$ means that $A^\circ \ne \empt$ for
each $A \in [X]^{\kpl}$.) Our aim is then to prove the following stepping up
result.

\begin{theorem}\label{tm:steppingup}
If every member of $\mc L_\kappa$ is $\kappa$-resolvable then actually
every member of $\mc L_\kappa$ is $\kappa^+$-resolvable.
\end{theorem}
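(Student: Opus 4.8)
The plan is to reduce, by Corollary~\ref{cl:rc}, to producing a $\kappa^+$-resolvable \emph{subspace} of each member of $\mc L_\kappa$, and then to manufacture such a subspace with the help of Corollary~\ref{co:nice}. First I would check that $\mc L_\kappa$ is a regular closed hereditary class of regular spaces: if $X\in\mc L_\kappa$ and $R\in\rcp X$, then $R$ is regular, $R$ is $\kappa^+$-compact because it is closed in the $\kappa^+$-compact space $X$ (a complete accumulation point in $X$ of a set $A\in[R]^{\kappa^+}$ lies in $\overline A\subs\overline R=R$ and is still a complete accumulation point of $A$ relative to $R$), and $|R|=\Delta(R)=\kappa^+$ because $R=\overline{\intt R}$ while $\Delta(X)=|X|=\kappa^+$. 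Since every member of $\mc L_\kappa$ automatically satisfies $|X|=\Delta(X)$, Corollary~\ref{cl:rc} --- applied with its ``$\mc C$'' being $\mc L_\kappa$ and its ``$\kappa$'' being our $\kappa^+$ --- tells us it suffices to prove: \emph{every $X\in\mc L_\kappa$ has a $\kappa^+$-resolvable subspace.} By hypothesis, $X$ and all of its regular closed subspaces are $\kappa$-resolvable; this ``hereditary $\kappa$-resolvability'' is the tool we have.

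Now fix $X\in\mc L_\kappa$. The aim is to apply Corollary~\ref{co:nice} to $X$ (which is $\kappa^+$-compact), so I would try to produce a disjoint family $\mc F$ of subsets of $X$ that are simultaneously $\kappa$-resolvable and $\kappa$-nice in $X$, with $|\mc F|\le\kappa^+$ and $\bigcup\mc F$ dense in $X$; the corollary then hands us a $\kappa^+$-resolvable open subspace and we are finished. (Lemma~\ref{lm:many_intersection} enters the argument exactly here, through the proof of Corollary~\ref{co:nice}.) For this it is enough to show that the $\kappa$-nice $\kappa$-resolvable subsets of $X$ form a $\pi$-network: a maximal disjoint subfamily $\mc F$ of them then has dense union by maximality, and $|\mc F|\le\kappa^+$ because its members are pairwise disjoint sets of size $\ge\kappa$ sitting inside a set of size $\kappa^+$.

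So the whole matter reduces to the claim that \emph{every $W\in\tau^+(X)$ contains a subset that is both $\kappa$-resolvable and $\kappa$-nice in $X$.} To prove it I would fix, by regularity, a regular closed $R\in\rcp X$ with $R\subs W$; then $R\in\mc L_\kappa$, so $R$ and all of its regular closed subspaces are $\kappa$-resolvable and $R$ is $\kappa^+$-compact. The plan is to build, by transfinite recursion on $\alpha<\kappa^+$, a pairwise disjoint family $\{A_\alpha:\alpha<\kappa^+\}\subs\mathfrak{R}_\kappa(R)\subs\mathfrak{R}_\kappa(X)$ of $\kappa$-resolvable subsets of $R$, each of size $\kappa$, so that at stage $\alpha$ the already-used set $\bigcup_{\beta<\alpha}A_\beta$ has size $<\kappa^+$ and hence (as $\Delta(R)=\kappa^+$) is nowhere dense in $R$; thus one can always choose $A_\alpha$ inside a $\kappa$-resolvable regular closed subset of $R$ disjoint from $\overline{\bigcup_{\beta<\alpha}A_\beta}$. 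In addition, the recursion has to be bookkept so that the ``limit set''
\[
F=\bigcap_{\alpha<\kappa^+}\overline{\bigcup\{A_\beta:\beta\in\kappa^+\setm\alpha\}}
\]
lands back in $\mc L_\kappa$ --- e.g.\ so that $F$ is a regular closed subset of $R$, or at least a $\kappa^+$-compact regular space with $\Delta(F)=|F|=\kappa^+$. Here the $\kappa^+$-compactness of $R$ is what guarantees $F\ne\empt$ (each $\bigcup_{\beta\ge\alpha}A_\beta$ has size $\kappa^+$, hence a complete accumulation point), and it is also what lets one keep $F$ under control. Once $F\in\mc L_\kappa$, it is $\kappa$-resolvable by hypothesis, it is $\kappa$-nice in $X$ by construction, and $F\subs R\subs W$, which proves the claim.

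I expect the main obstacle to be precisely this last recursive construction: arranging that each $A_\alpha$ can genuinely be found as a $\kappa$-resolvable subspace of size $\kappa$ (this forces one to extract, inside every $\kappa$-resolvable regular closed subspace, a $\kappa$-resolvable subspace of that size --- itself a careful interleaving argument), and, more delicately, arranging that the limit $F$ does not degenerate (for instance into a nowhere dense set of small dispersion character) but stays in the class $\mc L_\kappa$. As in the proof of Corollary~\ref{co:nice}, I expect the $\kappa^+$-compactness hypothesis to be exactly the ingredient that makes this bookkeeping succeed.
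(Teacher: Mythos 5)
Your overall frame coincides with the paper's: reduce by Corollary~\ref{cl:rc} to finding a $\kappa^+$-resolvable subspace, and obtain one from Corollary~\ref{co:nice} by showing that the sets which are both $\kappa$-nice and $\kappa$-resolvable form a $\pi$-network. The gap is precisely in the step you yourself flag as ``the main obstacle'' and leave open: producing the disjoint family $\{A_\alpha:\alpha<\kappa^+\}\subs\mathfrak{R}_\kappa(X)$ that witnesses niceness. Your recursion requires each $A_\alpha$ to be $\kappa$-resolvable \emph{and} of size $\kappa$, so that $\bigcup_{\beta<\alpha}A_\beta$ has size $<\kappa^+=\Delta(R)$ and is nowhere dense. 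But the only source of $\kappa$-resolvability available is the hypothesis of the theorem, and it applies to members of $\mc L_\kappa$, which have cardinality $\kappa^+$; nothing in the definition of $\mc L_\kappa$ provides $\kappa$-resolvable subsets of size $\kappa$, and in general they need not exist ($\kappa^+$-compactness says nothing about sets of size $\le\kappa$, which may all be closed discrete --- e.g.\ every countable subset of a crowded $P$-space member of $\mc L_\omega$ is closed discrete, hence not even $2$-resolvable). If you instead take the $A_\alpha$ of full size $\kappa^+$, then already $\overline{A_0}$ may be all of $R$, the ``small, hence nowhere dense'' bookkeeping collapses, and disjointness of later $A_\alpha$'s can no longer be arranged by avoiding closures. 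Likewise, your hope that $F$ comes out regular closed, or lands in $\mc L_\kappa$ ``by bookkeeping,'' is not substantiated.

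The paper closes exactly this hole with machinery you do not invoke: the dichotomy on \appr{{\kappa^+}} sets. If the \appr{{\kappa^+}} subsets of $X$ form a $\pi$-network, then Theorem~\ref{tm:app} (via an almost-disjoint refinement) yields $\kappa^+$-resolvability outright, without using the resolvability hypothesis at all. Otherwise one may assume $X$ has no \appr{{\kappa^+}} subset, and Lemma~\ref{lm:L_k} --- a stabilization argument on sequences of complete accumulation point sets, which is where the real work of the proof lives --- produces $\kappa^+$ pairwise disjoint sets of the form $A_\alpha=\accu{C_\alpha}$ with $C_\alpha\in[X]^{\kappa^+}$. Each such $A_\alpha$ is a closed subset lying in $\mc L_\kappa$, hence is $\kappa$-resolvable by hypothesis, and the corresponding $F$ is shown to belong to $\mc L_\kappa$ by a further complete-accumulation-point argument. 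Without Lemma~\ref{lm:L_k} or a substitute for it, your construction of the $A_\alpha$'s and of $F$ does not go through, so the proposal as it stands is incomplete at its central point.
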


Before giving the proof of this, however, we have to formulate and prove the
following lemma.

\begin{lemma}
Assume that $X \in \mc L_\kappa$ has no \appr {{\kappa^+}} subset. Then

\begin{enumerate}[(i)]\label{lm:L_k}
\item for any $A \in [X]^{\kappa^+}$ we have $A^\circ \in \mc L_\kappa$;

\medskip

\item there are $\kpl$ many pairwise disjoint sets of the form $A^\circ$
with $A \in [X]^{\kpl}$.
\end{enumerate}
\end{lemma}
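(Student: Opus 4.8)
The plan is to first establish (i), then use it to manufacture the pairwise disjoint family required in (ii). Throughout, I will use the standing hypothesis that $X$ has no \appr{\kappa^+} subset, together with the fact that $X \in \mc L_\kappa$, i.e. $X$ is regular, $\kappa^+$-compact, and satisfies $|X| = \Delta(X) = \kappa^+$.

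For part (i), fix $A \in [X]^{\kappa^+}$ and set $F = A^\circ$. Since $X$ is $\kappa^+$-compact we have $F \ne \empt$, and $F$ is closed, so $F$ is again regular and $\kappa^+$-compact (closed subsets of $\kappa^+$-compact spaces are $\kappa^+$-compact). It remains to check $|F| = \Delta(F) = \kappa^+$. The inequality $|F| \le |X| = \kappa^+$ is clear, and $\Delta(F) \le |F|$ always; so it suffices to show $\Delta(F) \ge \kappa^+$, i.e. every non-empty relatively open subset of $F$ has size $\kappa^+$ (this also forces $|F| = \kappa^+$). Suppose not: there is an open $U \in \tau^+(X)$ with $\empt \ne U \cap F$ and $|U \cap F| \le \kappa$. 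Pick $x \in U \cap F = U \cap A^\circ$. By regularity choose open $V$ with $x \in V \subs \overline V \subs U$. Then $x \in A^\circ$ gives $|V \cap A| = \kappa^+$, hence $(V \cap A)^\circ \ne \empt$ by $\kappa^+$-compactness, and $(V \cap A)^\circ \subs \overline V \cap A^\circ \subs U \cap F$; but $(V\cap A)^\circ \subs F$ and complete accumulation points of a set of size $\kappa^+$ inside $U\cap F$ would need $|U \cap F| = \kappa^+$ — wait, more directly: any point of $(V\cap A)^\circ$ is a complete accumulation point of $V \cap A$, so every neighbourhood of it meets $V \cap A \subs U \cap A$ in $\kappa^+$ points, which is impossible once we also arrange the neighbourhood inside $U$; this contradicts $|U \cap F|\le\kappa$ only after noting $(V\cap A)^\circ$ is non-empty and lies in $U$, whence $U$ meets $A$ in $\kappa^+$ points, and since those accumulation points lie in $F$ we must relocate the counterexample. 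To keep this clean I would instead argue: $|U\cap A| \ge |(V\cap A)| = \kappa^+$ forces, by $\kappa^+$-compactness applied inside the smaller open set and regularity, a point $y \in A^\circ = F$ every neighbourhood of which (shrunk into $U$) meets $A$ in $\kappa^+$ points, so $|U\cap F|$ cannot be $\le\kappa$ — contradiction. Thus $\Delta(F) = \kappa^+$ and $F \in \mc L_\kappa$.

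For part (ii), the natural approach is a transfinite recursion of length $\kappa^+$: having chosen pairwise disjoint sets $\{B_\beta = A_\beta^\circ : \beta < \alpha\}$ with $A_\beta \in [X]^{\kappa^+}$ for some $\alpha < \kappa^+$, I want to find $A_\alpha \in [X]^{\kappa^+}$ with $A_\alpha^\circ$ disjoint from all earlier $B_\beta$. Here is where the hypothesis "$X$ has no \appr{\kappa^+} subset" must be used: if at stage $\alpha$ no such $A_\alpha$ existed, then every $A\in[X]^{\kappa^+}$ would have $A^\circ$ meeting $\bigcup_{\beta<\alpha}B_\beta$, and since $\alpha < \kappa^+$ and each point of $A^\circ$ is a complete accumulation point, a counting/regularity argument lets us refine to produce, for a single fixed value $H := A^\circ$ (after stabilising, exactly as in the proof of Lemma \ref{lm:small_acc}(2) via the monotone-sequence stabilisation trick), $\kappa^+$ many pairwise disjoint sets $X_\nu \in [X]^{\kappa^+}$ with $\forall\,Y\in[X_\nu]^{\kappa^+}\,(Y^\circ\ne\empt)$ and $X_\nu^\circ = H$ — i.e. $H$ would be \appr{\kappa^+}, contradicting the hypothesis. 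I expect this stabilisation-plus-refinement step to be the main obstacle: one has to set up the decreasing/increasing sequences of complete-accumulation sets carefully (using that $|X| = \kappa^+$ and $\kappa^+$ is regular so such monotone $\le\kappa^+$-indexed sequences of subsets stabilise), and verify the "$\forall Y\in[X_\nu]^{\kappa^+}\,(Y^\circ\ne\empt)$" clause, which will follow from $\kappa^+$-compactness of $X$ applied to the relevant large subsets. Once the recursion runs through all $\alpha < \kappa^+$ we obtain the desired $\kappa^+$ many pairwise disjoint sets of the form $A^\circ$, completing the proof.

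A cleaner alternative for (ii), which I would try first, is to sidestep the recursion entirely: apply the stabilisation argument of Lemma \ref{lm:small_acc}(2) directly. Start from any $A \in [X]^{\kappa^+}$ with $|A^\circ| < \kappa^+$ if one exists; if instead $|A^\circ| = \kappa^+ = |X|$ for all such $A$, then pick $A$ with $X \setminus A^\circ$ small or use that $\{A^\circ : A\in[X]^{\kappa^+}\}$ is then already a rich family. In the first case, Lemma \ref{lm:small_acc}(2) literally produces an \appr{\kappa^+} subset $H \subs A^\circ$, contradicting the hypothesis; hence $|A^\circ| = \kappa^+$ for every $A\in[X]^{\kappa^+}$. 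Given this, one builds the disjoint family greedily: any set of $\le\kappa$ many previously-used $B_\beta$'s has union of size $\le\kappa < \kappa^+ = |X|$, so $X \setminus \bigcup_{\beta<\alpha}B_\beta$ still has size $\kappa^+$; apply part (i) to it to get a new $B_\alpha = (X\setminus\bigcup_{\beta<\alpha}B_\beta)^\circ \ne \empt$, which is automatically disjoint from each earlier $B_\beta$ since complete accumulation points of a set avoiding the $B_\beta$'s in a regular ($T_1$, as regular spaces here are implicitly $T_1$) space need not lie in... — this last disjointness claim needs the $B_\beta$ to be closed and the complement argument, so I would take $B_\alpha$ to be the complete-accumulation points of $X \setminus \overline{\bigcup_{\beta<\alpha}B_\beta}$ instead, using that each $B_\beta$ is closed with empty interior is not needed, only that removing $\le\kappa$ closed sets whose union has size $\le\kappa$ leaves a set of full size $\kappa^+$ and its complete accumulation points are disjoint from that union. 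This greedy construction, once the disjointness bookkeeping is pinned down, yields (ii); the only real content is part (i) plus the one invocation of Lemma \ref{lm:small_acc}(2) to rule out small $A^\circ$, and I expect the disjointness bookkeeping in the greedy step to be the place where care is needed.
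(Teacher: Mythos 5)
There are two genuine gaps here, one in each part. In part (i) your contradiction does not close: from $|V\cap A|=\kpl$ you only extract $(V\cap A)^\circ\ne\empt$, and the attempted repair --- producing one point $y\in F\cap U$ all of whose neighbourhoods meet $A$ in $\kpl$ points --- says nothing about the size of $U\cap F$, which could a priori consist of that single point ($x$ itself already had this property). What is needed is $|(V\cap A)^\circ|=\kpl$, and this is precisely where the hypothesis that $X$ has no \appr{{\kappa^+}} subset must enter: if some $B\in\br X;\kpl;$ had $|B^\circ|\le\kappa$, then, since $\kpl$-compactness gives $C^\circ\ne\empt$ for every $C\in\br B;\kpl;$, part (2) of lemma \ref{lm:small_acc} (applied with $\kpl$ in place of $\kappa$) would produce an \appr{{\kappa^+}} subset of $X$. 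You do state this consequence later, inside your alternative for (ii), but it is the missing step in (i) and should be established first; with it, $(V\cap A)^\circ\subs\overline V\cap A^\circ\subs U\cap F$ finishes the argument.

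The more serious problem is part (ii). The ``cleaner alternative'' greedy construction is unsound: by part (i) each $B_\beta=A_\beta^\circ$ belongs to $\mc L_\kappa$ and hence has cardinality $\kpl$, not $\le\kappa$; indeed $B_0$ may already be all of $X$, so $X\setm\bigcup_{\beta<\alpha}B_\beta$ need not have size $\kpl$ and the recursion can die at the very first step. Your first approach correctly senses that an obstruction to continuing must be converted into an \appr{{\kappa^+}} subset, but the ``stabilisation-plus-refinement step'' you defer is the entire content of the proof, and it is not a routine adaptation of lemma \ref{lm:small_acc}(2). The paper's route: first observe that if $A,B\in\br X;\kpl;$ and $A^\circ\setm B^\circ\ne\empt$, then some $C\in\br A;\kpl;$ has $C^\circ\cap B^\circ=\empt$ (shrink, by regularity, a neighbourhood witnessing $x\notin B^\circ$ and intersect with $A$). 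Then fix a disjoint family $\{X_\xi:\xi<\kpl\}\subs\br X;\kpl;$ and split into cases: either some increasing sequence $\bigl(\bigcup_{\eta<\xi}X_\eta\bigr)^\circ$ fails to stabilise, in which case the observation yields $\kpl$ many pairwise disjoint sets $C_\xi^\circ$ directly; or all such sequences stabilise, in which case one forms the tails $Y_{\xi,\eta}=\bigcup\{X_i:\xi\le i<\eta\}$, lets $F_\xi=(Y_{\xi,\eta(\xi)})^\circ$ be the stable values, and argues that the decreasing sequence $\<F_\xi:\xi<\kpl\>$ cannot stabilise --- a stable value would be witnessed as \appr{{\kappa^+}} by $\kpl$ many pairwise disjoint sets $Y_{\xi,\eta(\xi)}$ --- so again the observation produces $\kpl$ many disjoint $C_\xi^\circ$. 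Without this two-case analysis, or an equivalent, part (ii) remains unproved.
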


\begin{proof}
(i) It is immediate from part (2) of lemma \ref{lm:small_acc}, applied to $\kpl$
instead of $\kappa$, that for any $A \in [X]^{\kappa^+}$ we have $|A^\circ| = \kappa^+$.
Also, as $A^\circ$ is closed in $X$, it is $\kpl$-compact. So, to prove $A^\circ \in \mc L_\kappa$ it only remains to show
that $\Delta(A^\circ) = \kpl$.

To see this, assume that $U$ is open with $U \cap A^\circ \ne \empt$, say $x \in U \cap A^\circ$.
By the regularity of $X$ the point
$x$ has an open neighbourhood $V$ such that $\overline{V} \subs U$. Then $x \in A^\circ$
implies $|V \cap A| = \kpl$, hence $|(V \cap A)^\circ| = \kpl$. This, in turn, implies
$|U \cap A^\circ| = \kpl$ because $(V \cap A)^\circ \subs \overline{V} \cap A^\circ \subs U \cap A ^\circ$.

\medskip

(ii) Let us note first of all that if we have $A,\, B \in [X]^{\kpl}$ with  $A^\circ \setm B^\circ \ne \empt$
then there is a set $C \in [A]^{\kpl}$ such that $C^\circ \cap B^\circ = \empt$. Indeed, if
$x \in A^\circ \setm B^\circ$ and $V$ is an open neighbourhood of $x$ with $|\overline{V} \cap B| \le \kappa$ then
the choice $C = V \cap A$ clearly works.

Now we distinguish two cases.

\noindent{\bf
Case 1:}

There is a disjoint family $\{X_\xi : \xi < \kpl \} \subs [X]^{\kpl}$ such that the increasing $\kpl$-sequence
$\{Y_\xi^\circ : 0 < \xi < \kpl\}$, where $Y_\xi = \bigcup_{\eta < \xi} X_\eta$, does not
stabilize. This means that the set $I = \{\xi < \kpl : Y_{\xi+1}^\circ \setm Y_\xi^\circ \ne \empt \}$
has cardinality $\kpl$. By our above remark then for each $\xi \in I$ there is a set
$C_\xi \in [Y_{\xi+1}]^{\kpl}$ such that $C_\xi^\circ \cap Y_{\xi}^\circ = \empt$.
But this means that the members of the family $\{C_\xi^\circ : \xi \in I\}$ are pairwise
disjoint, and we are done.

\noindent{\bf
Case 2:}

For every disjoint family $\{X_\xi : \xi < \kpl \} \subs [X]^{\kpl}$ the sequence $\{Y_\xi^\circ : \xi < \kpl\}$,
as defined above, does stabilize.

Let us then fix a disjoint family $\{X_\xi : \xi < \kpl \} \subs [X]^{\kpl}$ and for any $\xi < \eta < \kpl$
put $Y_{\xi,\eta} = \bigcup \{X_i : \xi \le i < \eta \}$. But now for each fixed $\xi < \kpl$ the sequence
$$\{(Y_{\xi,\eta})^\circ : \xi < \eta < \kpl \}$$ stabilizes, i.e. there is an $\eta(\xi) < \kpl$ such that
$$(Y_{\xi,\zeta})^\circ = (Y_{\xi,\eta(\xi)})^\circ = F_\xi$$
whenever $\eta(\xi) \le \zeta < \kpl$.

The sequence $\{F_\xi : \xi < \kpl \}$ is clearly decreasing and we claim that it cannot stabilize.
Indeed, assume on the contrary that there is some $\xi_0 < \kpl$ such that $F_\xi = F_{\xi_0}$
for all $\xi_0 < \xi < \kpl$. We may then select a set $I \in [\kpl \setm \xi_0]^{\kpl}$ such that
$\eta(\xi) < \zeta$ holds whenever $\{\xi, \zeta\} \in [I]^2$ and $\xi < \zeta$.
But then the disjoint collection $\{Y_{\xi,\eta(\xi)} : \xi \in I \} \subs [X]^{\kpl}$ would witness that the set
$ F_{\xi_0}$ is \appr {{\kappa^+}} in $X$, contradicting our assumption.

Consequently, the set $J = \{\xi < \kpl : F_\xi \setm F_{\xi+1} \ne \empt\}$ has cardinality $\kpl$ and
by our introductory remark for each $\xi \in J$ there is a set $C_\xi \in [Y_{\xi,\eta(\xi)}]^{\kpl}$
such that $C_\xi^\circ \cap F_{\xi+1} = \empt$. But we also have $C_\xi^\circ \subs F_\xi$ for each $\xi \in J$,
consequently the sets $\{C_\xi^\circ : \xi \in J\}$ are pairwise disjoint, completing
the proof of (ii).

\end{proof}

\begin{proof}[Proof of theorem \ref{tm:steppingup}]
Let us assume, to begin with, that every member of $\mc L_\kappa$ is $\kappa$-resolvable.
Our aim is to show that then every member of $\mc L_\kappa$ is $\kappa^+$-resolvable.
Since $\mc L_\kappa$ is regular closed hereditary, by corollary \ref{cl:rc} it suffices
to prove that every member of $\mc L_\kappa$ has a $\kpl$-resolvable subspace.

Now, if $X \in \mc L_\kappa$ is such that its \appr {{\kappa^+}} subsets form a $\pi$-network
in $X$ then it follows from theorem \ref{tm:app} that $X$ is $\kpl$-resolvable.
Therefore, it will suffice to show that any space $X \in \mc L_\kappa$ that
has no \appr {{\kappa^+}} subset contains a $\kpl$-resolvable subspace.

To see this, we may apply lemma \ref{lm:L_k} to obtain a family
$$\{C_\alpha : \alpha < \kpl \} \subs [X]^{\kpl}$$ such that the sets $A_\alpha = C_\alpha^\circ$
are pairwise disjoint. Since each $A_\alpha \in \mc L_\kappa$ is $\kappa$-resolvable by our assumption,
it follows that the closed set
$$F = \bigcap_{\alpha < \kappa^+} \overline{\bigcup \{A_\beta  : \beta \in \kappa^+ \setm \alpha \}} \,$$
is $\kappa$-nice in the sense of definition \ref{df:nice}.

We claim that $F \in \mc L_\kappa$ holds also and this will follow if we can show that $\Delta(F) = \kpl$.
To see this, let $U$ be open with $U \cap F \ne \empt$ and pick $x \in U \cap F$.
By the regularity of $X$ the point
$x$ has an open neighbourhood $V$ such that $\overline{V} \subs U$.
Clearly, then $$I = \{\alpha : V \cap A_\alpha \ne \empt \} \in [\kappa^+]^{\kpl}.$$
Let us pick for each $\alpha \in I$ a point $x_\alpha \in V \cap A_\alpha$
and consider the set $B = \{x_\alpha : \alpha \in I\}$.
Then $B \in [X]^{\kpl}$ and $B^\circ \subs \overline{V} \cap F \subs U \cap F$,
hence $|U \cap F| = \kpl$. So we indeed have $F \in \mc L_\kappa$, and therefore $F$ is both
$\kappa$-nice and $\kappa$-resolvable.

This argument can also be applied to any regular closed subset $R \in \rcp X$
to obtain a subset of $R$ that is both $\kappa$-nice and $\kappa$-resolvable.
Thus we have concluded that the sets that are both $\kappa$-nice and $\kappa$-resolvable
form a $\pi$-network in $X$.
So if $\mc F$ is any maximal disjoint collection of such subsets of $X$
then corollary \ref{co:nice} can be applied to $X$ and $\mc F$ to conclude that $X$ has a
$\kpl$-resolvable (open) subspace, and thus the proof is completed.

\end{proof}

Since Lindelöf spaces are trivially $\omega_1$-compact, we immediately obtain from
theorem \ref{tm:steppingup} and the case $\kappa = \omega_1$ of theorem \ref{tm:gen_e_res}
the following result that was promised already in the abstract.

\begin{corollary}\label{co:Lin}
Every $\omega_1$-compact (hence every Lindelöf) regular space $X$ with
$|X| = \Delta(X) = \omega_1$ is $\omega_1$-resolvable.
\end{corollary}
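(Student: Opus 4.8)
The plan is to derive Corollary \ref{co:Lin} directly from Theorem \ref{tm:steppingup} together with the case $\kappa = \omega_1$ of Theorem \ref{tm:gen_e_res}, so essentially no new work is needed — the corollary is a formal consequence. First I would recall that every Lindel\"of space, and more generally every $\omega_1$-compact space, has the property that every subset of cardinality $\omega_1$ has a complete accumulation point; indeed, $\omega_1$-compactness of $X$ is, by the terminology fixed before Corollary \ref{co:nice}, precisely the statement that $A^\circ \ne \empt$ for each $A \in [X]^{\omega_1}$, and Lindel\"of spaces satisfy this because a subset of size $\omega_1$ with no complete accumulation point would yield a point-countable open cover with no countable subcover. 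So the class of regular $\omega_1$-compact spaces $X$ with $|X| = \Delta(X) = \omega_1$ is exactly the class $\mc L_\kappa$ for $\kappa = \omega$.

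Next I would invoke Theorem \ref{tm:gen_e_res} with $\kappa = \omega_1$. Any space $X$ as in the corollary is regular and satisfies $\Delta(X) = \omega_1 \ge \omega_1$; moreover $\widehat\ext(X) \le \omega_1$, because $X$ being $\omega_1$-compact (in particular Lindel\"of) has no closed discrete subset of size $\omega_1$, so $\widehat\ext(X) \le \omega_1$. Hence the hypothesis $\Delta(X) \ge \omega_1 \ge \widehat\ext(X)$ of Theorem \ref{tm:gen_e_res} is met, and we conclude that $X$ is $\omega$-resolvable. In particular, every member of $\mc L_\omega$ is $\omega$-resolvable, i.e. (a fortiori) $\kappa$-resolvable for $\kappa = \omega$.

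Finally I would apply Theorem \ref{tm:steppingup} with $\kappa = \omega$: having just shown that every member of $\mc L_\omega$ is $\omega$-resolvable, the theorem yields that every member of $\mc L_\omega$ is $\omega^+ = \omega_1$-resolvable. Since, as noted in the first step, any regular $\omega_1$-compact space $X$ with $|X| = \Delta(X) = \omega_1$ — and hence any such Lindel\"of space — belongs to $\mc L_\omega$, this gives exactly the conclusion of the corollary. There is no genuine obstacle here; the only point requiring a word of care is the bookkeeping identification $\mc L_\omega = \{$ regular $\omega_1$-compact $X : |X| = \Delta(X) = \omega_1\}$ and the observation that Lindel\"ofness implies $\omega_1$-compactness, both of which are routine. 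All the real content lies in Theorems \ref{tm:gen_e_res} and \ref{tm:steppingup}, already proved above.
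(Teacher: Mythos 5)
Your proof is correct and is essentially identical to the paper's own derivation: the corollary is obtained by applying Theorem \ref{tm:gen_e_res} with $\kappa=\omega_1$ (using that $\omega_1$-compactness gives $\widehat\ext(X)\le\omega_1$) to see that every member of $\mc L_\omega$ is $\omega$-resolvable, and then Theorem \ref{tm:steppingup} with $\kappa=\omega$ to step up to $\omega_1$-resolvability. The bookkeeping identification of the class in the statement with $\mc L_\omega$, and the observation that Lindel\"of implies $\omega_1$-compact, are exactly the (routine) points the paper also relies on.
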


We were unable to answer the natural question whether, in corollary \ref{co:Lin}, the assumption of $\omega_1$-compactness
can be relaxed to the property of having countable extent.

\end{document}